\newcommand{\nc}{\newcommand}
\theoremstyle{plain}
\newtheorem{theorem}{\sc Theorem}[section]
\newtheorem{thm}[theorem]{\sc Theorem}
\newtheorem{lem}[theorem]{\sc Lemma}
\newtheorem{prop}[theorem]{\sc Proposition}
\newtheorem{cor}[theorem]{\sc Corollary}
\newtheorem{rem}[theorem]{\sc Remark}
\newtheorem{ex}[theorem]{\sc Example}
\newtheorem{alg}[theorem]{\sc Algorithm}
\theoremstyle{definition}
\newtheorem{define}[theorem]{\sc Definition}
\renewcommand{\refname}{\center{REFERENCES}}
\newcommand{\vfi}{\varphi}
\newcommand{\N}{\mathbb{N}}
\newcommand{\Z}{\mathbb{Z}}
\nc{\GG}{\mathfrak{G}}
\nc{\LL}{\mathcal{L}}
\nc{\JJ}{\mathcal{J}}
\nc{\NN}{\mathfrak{N}}
\nc{\HH}{\mathfrak{H}}
\nc{\cH}{\mathcal{H}}
\nc{\cG}{\mathcal{G}}
\nc{\pgs}{\mathfrak{p}\mathfrak{g}\mathfrak{s}}
\nc{\pcr}{\mathfrak{p}\mathfrak{c}\mathfrak{r}}
\nc{\wg}{\widehat{g}}
\nc{\wh}{\widehat{h}}
\nc{\wx}{\widehat{x}}
\nc{\wy}{\widehat{y}}
\nc{\Char}{\operatorname{char}}
\nc{\Ker}{\operatorname{Ker}}
\nc{\Imm}{\operatorname{Im}}
\nc{\Aut}{\operatorname{Aut}}
\nc{\Cl}{\operatorname{Cl}}
\nc{\Orb}{\operatorname{Orb}}
\nc{\noreq}{\trianglelefteq}
\title[$q$-Tensor Square of a Polycyclic Group]
{A Polycyclic Presentation for the $q$-Tensor Square of a Polycyclic Group}
\author[Dias]{Ivonildes Ribeiro Martins Dias}
\address{Instituto de Matem\'atica e Estat\'istica,
		 Universidade Federal de Goi\'as, Goi\^ania-GO, 74001-970 Brazil}
\email{ivonildes@ufg.br}
\author[Rocco]{Nora\'i Romeu Rocco*}
\address{Departamento de Matem\'atica-IE, Universidade de Bras\'ilia,
Bras\'ilia-DF, 70910-900 Brazil}
\email{norai@unb.br}
\thanks{(*) The author acknowledges partial financial support from FAPDF, Brazil, during the preparation of this
work.}
\subjclass[2010]{20F45, 20E26, 20F40}
\keywords{Non-abelian tensor square; computation of q-tensor squares; polycyclic groups}
\date{\today}
\begin{document}

\begin{abstract}
Let $G$ be a group and $q$ a non-negative integer. We denote by $\nu^q(G)$ a certain extension of the $q$-tensor square
$G \otimes^q G$ by $G \times G$. In this paper we derive a polycyclic presentation for $G \otimes^q G$,  when $G$ is
polycyclic, via its embedding into $\nu^q(G)$. Furthermore, we derive presentations for the
$q$-exterior square $G \wedge^q G$ and for the second homology group $H_2(G, \Z_q).$
Additionally, we establish a criterion for computing the $q-$exterior centre $Z_q^\wedge (G)$ of a polycyclic group $G,
$ which is helpful for deciding whether $G$ is capable modulo $q$. These
results extend to all $q \geq 0$ existing methods due to Eick and Nickel for the case $q = 0$.
\end{abstract}

\maketitle

\section{Introduction} \label{sec:1}

Let $G$ be a group and $q$ a non-negative integer. The $q$-tensor square $G \otimes^q G$ is a particular case of the
$q$-tensor product $G \otimes^q H$ of groups $G$ and $H$ which act compatibly on each other; this construction was
defined by Conduch\'e and Rodrigues-Fernandez in \cite{Conduche}, in the context of $q$-crossed modules (see also
\cite{Ellis}, \cite{McDermott} and \cite{Brown}). It reduces to Brown and Loday's non-abelian tensor product
$G \otimes H$ when $q = 0$ (cf. \cite{BL}).

For $x, y \in G$, we write the conjugate of $y$ by $x$ as $y^x = x^{-1} y x$; the commutator of $x$ and $y$ is then
written as $[x, y] = x^{-1}y^{-1}xy$. Commutators are left normed: $[x, y, z] = [[x,y],z]$, and so on for commutators
of higher weights.

For $q \geq 1$ let $\widehat{\cG} := \{\widehat{k} \mid k \in G\}$ be a set of symbols, one for each element of $G$.
According to Ellis \cite{Ellis}, the $q$-tensor square $G \otimes^q G$ is then defined to be the group
generated by all symbols
$g \otimes h$ and $\widehat{k}$, $g, h, k \in G$, subject to the following defining relations:
\begin{eqnarray} \label{qts1}
(g\otimes h h_{1}) = (g \otimes h )(g^{h_{1}}\otimes h^{h_{1}})
\end{eqnarray}
\begin{eqnarray} \label{qts2}
gg_{1}\otimes h = (g^{g_{1}}\otimes h^{g_{1}})  (g_{1}\otimes h )
\end{eqnarray}
\begin{eqnarray} \label{qts3}
(g \otimes h )^{\widehat{k}} = (g^{k^{q}}\otimes h^{k^{q}})
\end{eqnarray}
\begin{eqnarray} \label{qts4}
{\widehat{kk_{1}}}= \widehat{k} \displaystyle \prod _{i=1}^{q-1} \left(k
\otimes  (k_{1}^{-i})\right)^{k^{q-1-i}}{\widehat k_{1}}
\end{eqnarray}
\begin{eqnarray} \label{qts5}
\left[ \widehat{k}, \widehat{k_{1}}\right]= k^{q} \otimes {k_{1}}^{q}
\end{eqnarray}
\begin{eqnarray} \label{qts6}
{\widehat{\left[g,h\right]}}= (g\otimes h )^{q}
\end{eqnarray}
for all $g, g_1$,$h, h_1$, $k, k_1 \in G$. If $q=0$ then we set $\widehat{\cG} = \emptyset$ to get the group generated
by the symbols $g \otimes h, \; g, h \in G,$ subject to the relations \eqref{qts1} and \eqref{qts2} only; that is,
$G \otimes^0 G$ is the non-abelian tensor square $G \otimes G$.
By the defining relations \eqref{qts1} -- \eqref{qts6} we see that the diagonal $\bigtriangledown^q(G) = \langle g
\otimes g \mid g \in G \rangle$ is a central subgroup of $G \otimes^q G$. The $q$-exterior square $G \wedge^q G$ is by
definition the factor group (see \cite{ER89}):
\begin{equation} \label{def:qext}
	G \wedge^q G = \dfrac{G \otimes^q G}{\nabla^q(G)}.
\end{equation}
We usually write $g \wedge h$ for the image of $g \otimes h$ in $G \wedge^q G$.

There is a map
\begin{equation} \label{eq:varrho}
\varrho :  G \otimes^q G \to G, \;  g \otimes h \mapsto [g, h], \;  \widehat{k} \mapsto k^q,
\end{equation}
for all $g, h, k  \in G$. Clearly, $\nabla^q(G) \leq \Ker\varrho$ and we have (see for instance
\cite[Proposition 18]{Brown} or \cite[Theorem 2.12]{BR}):
\begin{equation} \label{eq:H2}
\Ker\varrho /\nabla^q (G) \cong H_2(G, \Z_q),
\end{equation}
the second homology group of $G$ with coefficients in the trivial $G$-module $\Z_q$.
The image $\Imm \varrho$ is the subgroup $G^\prime G^q \leq G$, where $G^\prime$ is the derived subgroup of $G,$
generated by all commutators $[g, h]$ with $g,h \in G$,  and $G^q$ is the subgroup of $G$ generated by all $q-$th
powers
$g^q, \, g \in G$. Thus, we get the exact sequence (cf. \cite[Proposition 18]{Brown}):
\begin{equation} \label{eq:brown}
1 \to H_2(G, \Z_q) \to G \wedge^q G \to G^\prime G^q \to 1.
\end{equation}
A group $G$ is called $q$-{\em perfect} in case $G = G^\prime G^q$. If this is the case, then
 the above sequence shows that $G \wedge^q G$ is a $q$-central extension of $G$ and
in addition it is the unique universal $q-$central extension of $G$ (see \cite{Brown}). Notice that
if $G$ is $q-$perfect then $G \otimes^q G \cong G \wedge^q G$.

The $q$-exterior square is also helpful in deciding whether a group $G$ is $q$-capable; recall that
$G$ is $q$-capable if there exists a group $Q$
such that $Z(Q) = Z_q(Q)$ and $G \cong Q/Z(Q)$, where $Z(Q)$ is the center of $Q$ and $Z_q(Q)$ is the
$q$-center, that is, the elements of the center $Z(Q)$ of order dividing $q$. The
 $q$-exterior center of $G$ is the subgroup of $G$ defined by
\begin{equation} \label{eq:extcenter}
	Z_q^{\wedge}(G) = \left\{g \in G \; \vert \; g \wedge x = 1 \in G \wedge^q G,  \; \text{for all}\;  x \in G \right\}.
\end{equation}
In \cite[Proposition 16]{Ellis} Ellis proved that the group $G$ is $q$-capable if, and only if, $Z_q^\wedge(G)=1$.

So getting a presentation for the $q$-tensor square of a group $G$ and for its subfactors is an interesting task.

It's known that if $G$ is a polycyclic group then $G \otimes^q G$ is polycyclic, for all $q \geq 0$
(see for instance \cite{BR}). In \cite{EN} the authors describe algorithms to compute the non-abelian tensor square
$G \otimes G$,
the exterior square $G \wedge G$ and the Schur multiplier
$M(G),$ among others, for a polycyclic group $G$ given by a consistent polycyclic presentation; the implementation of
this algorithm is available in \cite{ENGAP}. They manage to find such an algorithm to computing $G \otimes G$   by
finding a presentation of the group $\nu(G)$, as introduced for instance in \cite{Rocco} (see also \cite{EL}), which
turns out to be an extension of
$G \otimes G$ by $G \times G$.

The present paper aims to extend that algorithms to all $q \geq 0$. Instead of group $\nu(G)$ we now consider the group
$\nu^q(G)$, defined for instance in \cite[Definition 2.1]{BR}. To ease reference we briefly describe this group early
in the next section.

The paper is organized as follows.

In Sec. 2 we describe some basic constructs and preliminaries results. In Sec. 3 we give consistent polycyclic
presentations for certain q-central extension of $G$, more specifically, the groups $E_q(G)$ and $\mathfrak{E}^q(G).$
In Sec. 4 we give polycyclic presentations for the second homology group $H_2(G, \mathbb{Z}_q)$ and for the $q$-
exterior square $G \wedge^q G$; use of the group $E_q(G)$ is made to exemplify the computation of the
q-exterior centre of $G$.
In Sec. 5 we provide a consistent polycyclic presentation for the group $\tau^q(G)$. Finally, in Sec. 6 we give an
algorithm to compute a polycyclic presentation for $\nu^q(G)$ and the q-tensor square of a polycyclic group $G$.

Notation is fairly standard; for basic results on Group Theory, see for instance \cite{Robinson}. In this article all
group actions are on the right. Basic notation and
structural results concerning  $\nu^q(G)$ can be found for instance in \cite{BR}.

The main content of this article is part of the doctoral thesis~\cite{Ribeiro} of the first named author,
elaborated under the supervision of the second.

\section{Preliminary Results} \label{sec:2}

We begin this section by defining the group $\nu^q(G)$ and giving a brief description of some of its properties.

To this end, let $G^\vfi$ be an isomorphic copy of $G,$ via an isomorphism $\varphi$ such
that $\vfi: g \mapsto g^\vfi$, for all $g \in G.$ With these data we immediately get the group $\nu(G)$, as mentioned
before, defined as follows:
\begin{equation} \label{eq:presentanu}
 \nu(G) : = \left \langle G \cup G^{\varphi} \, | \,
[g, h^{\varphi}]^k = [g^k, (h^k)^{\varphi}] = [g, h^{\varphi}]^{k^{\varphi}}, \,
 \forall g,h,k \in G \right \rangle.
\end{equation}
It's well known (see \cite{Rocco}, and also \cite{EL}) that the subgroup $[G, G^\varphi]$ of $\nu(G)$ is isomorphic to
the non-abelian tensor square $G \otimes G$, so that the strategy of finding an appropriate representation of $\nu(G)$
can be useful to compute $G \otimes G$ and various of its relevant subfactors (see for instance \cite{EL},
\cite{Rocco}, \cite{McDermott}, \cite{BM} and \cite{EN}).

Now for $q \geq 1$, let $\widehat{\mathcal{G}} = \{\widehat{k} \, | \, k \in G \}$ be a set of symbols, one for each
element of $G$ (for $q = 0$ we set $\widehat{\mathcal{G}} = \emptyset,$ the empty set) and
let $F(\widehat{\mathcal{G}})$ be the free group over $\widehat{\mathcal{G}}.$
Write $\nu(G) \ast F(\widehat{\mathcal{G}})$ for
the free product of $\nu(G)$ and $F(\widehat{\mathcal{G}}).$ As $G$ and $G^{\varphi}$
are embedded into $\nu(G)$ we shall identify the elements of $G$ (respectively of $G^{\varphi}$)
with their respective images in $\nu(G) \ast F(\widehat{\mathcal{G}})$.
Denote by $J$ the normal closure in $\nu(G) \ast F(\widehat{\mathcal{G}})$
of the following elements, for all $\widehat{k}, \widehat{k_1} \in
\widehat{\mathcal{G}}$ and $\ g, \, h  \in G:$
 \begin{gather}
{g}^{-1} \, \widehat{k} \, g \; \widehat{(k^g)}^{-1}; \label{RR1} \\
(g^{\varphi})^{-1} \, \widehat{k} \, g^{\varphi} \; \widehat{(k^g)}^{-1};
\label{RR2} \\
(\widehat{k})^{-1} [g ,h^{\varphi}] \, \widehat{k} \; [g^{k^q}, (h^{k^{q}})^{\varphi}]^{-1};
\label{RR3} \\
(\widehat{k})^{-1} \, \widehat{k k_1} \; (\widehat{k_1})^{-1}
 \displaystyle({\prod_{i=1}^{q-1}}[k, (k^{-i}_1)^{\varphi}]^{k^{q-1-i}})^{-1};  \label{RR4} \\
 [\widehat{k}, \widehat{k_1}] \; [k^q, (k^{q}_1)^{\varphi}]^{-1}; \label{RR5} \\
 \widehat{[g, h]} \; [g, h^{\varphi}]^{-q}.  \label{RR6}
 \end{gather}

\begin{define} \label{nu^q} \label{def:nuq}
The group $\nu^q(G)$ is defined to be the factor group
\begin{equation}
\nu^q(G) := ( \nu(G) \ast F(\widehat{\mathcal{G}}) )/J.
\end{equation}
\end{define}
Note that for $q=0$ the sets of relations $(\ref{RR1})$ to
$(\ref{RR6})$ are empty; in this case we have $\nu^{0}(G) = \nu(G) \ast
F(\widehat{\mathcal{G}}))/J \cong \nu(G)$.
Let $R_1, \ldots, R_6$ be the sets of relations corresponding to
$(\ref{RR1}), \ldots, (\ref{RR6})$, respectively, and let $R$ be their union,
$R= \bigcup^6_{i=1}R_i$. Therefore, $\nu^q(G)$ has the presentation:
 \begin{equation*}
\nu^q(G)= \left \langle G, G^{\varphi}, \widehat{\mathcal{G}} \; | \; R,
[g,h^{\varphi}]^k \, [g^k, (h^k)^{\varphi}]^{-1},
[g, h^{\varphi}]^{k^{\varphi}} \, [g^k, (h^k)^{\varphi}]^{-1}, \, \forall g, h,
k \in G \right \rangle.
 \end{equation*}
The above presentation of $\nu^q(G)$ is a variant of the one given by Ellis in
\cite{Ellis}.

There is an epimorphism $\rho: \nu^q(G) \twoheadrightarrow G,  g \mapsto  g, h^{\varphi} \mapsto h,
 \widehat{k} \mapsto k^q$. On the other hand the inclusion of
 $G$ into $\nu(G)$ induces a homo\-mor\-phism $\imath: G \to \nu^q(G)$. We have
 $g^{\imath \rho} = g$ and thus $\imath$ is injective.
 Similarly the inclusion of $G^{\vfi}$ into $\nu(G)$ induces a monomorphism
$\jmath: G^{\vfi} \to \nu^q(G)$. These embeddings allow us to identify the elements $g \in G$
and $h^{\vfi} \in G^{\vfi}$ with their respective images $g^{\imath}$ and
$(h^{\vfi})^{\jmath}$ in $\nu^{q}(G)$.

Now let $\GG$ denote the subgroup of $\nu^q(G)$ generated by the
images of $\widehat{\cG}$. By relations $(\ref{RR3})$,  $\GG$
normalizes the subgroup $[G, G^{\varphi}]$ in $\nu^q(G)$ and hence
${\Upsilon}^{q}(G) := [G, G^{\varphi}] \GG$ is a normal subgroup of
${\nu}^q(G)$. Hence we get $\nu^q(G) = G^{\varphi} \cdot (G \cdot \Upsilon^q(G)),$ where the dots
mean internal semidirect products.

By \cite[Proposition 2.9]{BR} there is an isomorphism
$\mu: {\Upsilon}^{q}(G) \to G \otimes^q G$ such that
$[g, h^\varphi] \mapsto g \otimes h, \; \widehat{k} \mapsto \widehat{k}$,
for all $g, h, k \in G$ and for all $q \geq 0$. We then get (see \cite[Corollary 2.11]{BR})
\begin{equation} \label{eq:decompose_nu}
\nu^q(G) \cong G \ltimes (G \ltimes (G \otimes^q G));
\end{equation}
This decomposition of $\nu^q(G)$ is analogous to one due to Ellis in \cite{Ellis}; it generalizes a
similar result found in \cite{Rocco} for $q=0.$

In view of the above isomorphism, unless otherwise stated
from now on we will identify $G \otimes^q G$ with the subgroup
$\Upsilon^q(G) = [G, G^\vfi] \GG \leq \nu^q(G)$ and write
$[g, h^\vfi]$ in place of $g \otimes h$, for all
$g, h \in G$.
Following \cite{BR} we write $\Delta^q(G)$ for the subgroup
$\langle [g, g^\vfi] \vert g \in G \rangle \leq \Upsilon^q(G),$
which by Remark~\ref{rem:action} below is a central subgroup of $\nu^q(G).$
The isomorphism $\mu$ restricts to an isomorphism $\Delta^q(G) \stackrel{\mu}{\cong} \nabla^q(G)$
and, consequently, the factor group $\Upsilon^q(G)/ \Delta^q(G)$
is isomorphic to the q-exterior square $G \wedge^q G$.
In this case, as usual we simply write $g \wedge h$ to denote the coset
$[g, h^\varphi] \Delta^q(G)$ in  $G \wedge^q G.$
We shall eventually write $\mathcal{T}$ to denote the subgroup
$[G, G^\vfi]$ of ${\nu^q(G)}$ in order to distinguish it from the non-abelian tensor square
$G \otimes G \equiv [G, G^\vfi] \leq \nu(G)$ in the case $q =0$. We also write $\tau^q(G)$
for the factor group $\nu^q(G)/ \Delta^q(G);$ thus we get
\begin{equation*} \label{eq:tauq}
\tau^q(G) \cong ((G \wedge^q G) \rtimes G) \rtimes G.
\end{equation*}

\begin{rem} \label{rem:action}
It should be noted that the actions of $G$ and
$G^{\vfi}$ on $\Upsilon^q(G)$ are those induced by the defining relations of
$\nu^q(G)$: for any elements $g, x \in G$, $h^{\vfi}, y^{\vfi} \in G^{\vfi}$
and $\widehat{k} \in \widehat{\mathcal{G}}$, we have
$[g, h^{\vfi}]^{x} = [g^{x}, (h^{x})^{\vfi}]$ and
$(\widehat{k})^{x} = \widehat{(k^x)}$. In view of the isomorphism $\Upsilon^q(G) \equiv G \otimes^q G$, these
correspond to the action of $G$ on $G \otimes^q G$ as given for instance in \cite{Ellis}:
$$\left\{
	\begin{array}{cccc}
	(g \otimes h)^x = g^x \wedge h^x \\
	\; (\widehat{k})^x = \widehat{k^x} .\\
	\end{array}\right.
$$
Similarly,
$[g, h^{\vfi}]^{y^{\vfi}} = [g^{y}, (h^{y})^{\vfi}]$ and
$(\widehat{k})^{y^{\vfi}} = \widehat{(k^y)}.$ In addition, for any $\tau \in
\Upsilon^q(G)$, $(g \tau)^{y^{\vfi}} = g [g, y^{\varphi}] \tau^{y^{\vfi}} \in G \Upsilon^q(G)$.
Similar actions are naturally induced on the q-exterior square $G \wedge^q G$.
\end{rem}

It is known that if $G$ is polycyclic, then $\nu^q(G)$ is polycyclic for all $q \geq 0$ and thus,
as mentioned before, $G \otimes^q G$ is polycyclic.
In \cite{BR} the authors proved that, for a polycyclic group $G$ given by a consistent polycyclic presentation, the
defining relations of $\nu^q(G)$ can be reduced to relations among the polycyclic generators, with the only exception
of relations~\eqref{RR4} which have a more complicated handling characteristic. Even so, they were able to use the GAP
System~\cite{gap} to compute $\nu^q(G),$ $G \otimes^q G$ and
$G \wedge^q G$, for some small groups $G$ and particular values of $q.$  In addition, in \cite{RR} it is given a
description of the q-tensor square of a n-generator nilpotent group of class 2, $n > 1$, for all $q > 1$ and $q$ odd.

Our purpose in this article is to overcome in some way the difficulty of dealing with relations~\eqref{RR4} and give a
polycyclic presentation for the groups $\nu^q(G)$, $G \otimes^q G$, $G \wedge^q G$ and $H_2(G, \Z_q)$, for all
$q \geq 0$, when $G$ is polycyclic given by a consistent polycyclic presentation. Our approach is based on ideas of
Eick \& Nickel~\cite{EN} for the case $q = 0$.

The concept of a crossed pairing (biderivation) has been used in order to determine homomorphic images of the
non-abelian tensor square $G \otimes G$ (see~\cite[Remark 3]{BJR}). We need to extend this concept in order to the
context of the q-tensor square.

\begin{define} \label{def:qbide}
Let $G$ and $L$ be arbitrary groups and $q$ a non-negative integer.
A function $\lambda: G \times G \times G \to L$ is called a $q$-{\em biderivation} if the following properties hold:
	\begin{eqnarray}\label{relqbide}
	(gg_1, h, k)\lambda = (g^{g_1},h^{g_1},1)\lambda \, (g_1,h,k)\lambda
	\end{eqnarray}
	\begin{eqnarray} \label{relqbide2}
	(g,hh_1,k)\lambda = (g,h_1,1)\lambda \,  (g^{h_1},h^{h_1},k)\lambda
	\end{eqnarray}
	\begin{eqnarray} \label{relqbide3}
	\left((1,1,k)\lambda\right)^{-1}(g,h,1)\lambda \, (1,1,k)\lambda=(g^{k^q},h^{k^q},1)\lambda
	\end{eqnarray}
	\begin{eqnarray} \label{relqbide4}
	(1,1,kk_1)\lambda = (1,1,k)\lambda \, \prod_{i=1}^{q-1}\left\{(k,(k_1^{-i})^{k^{q-1-i}},1)\lambda \right\} \,
	(1,1,k_1)\lambda
	\end{eqnarray}
	\begin{eqnarray} \label{relqbide5}
	\left[(1,1,k)\lambda,(1,1,k_1)\lambda\right]=(k^q,k_1^q,1)\lambda
	\end{eqnarray}
	\begin{eqnarray} \label{relqbide6}
	(1,1,[g,h])\lambda = \left((g,h,1)\lambda\right)^q
	\end{eqnarray}
	for all $g,g_1,h, h_1,k_1 \in G$.
\end{define}

By the defining relations \ref{def:nuq} of $\nu^q(G)$, it is easy to see that a $q$-biderivation provides a
universal property of the $q$-tensor square of a group $G$; we record this property as

\begin{prop} \label{qbide}
Let $G$ and $L$ be arbitrary groups and $\lambda : G \times G \times G \to L$ a $q$-biderivation. Then, there
exists a unique homomorphism $\widetilde{\lambda}: G \otimes^q G \to L$ such that the following hold, for all
$g, h, k \in G$:
\[
(g \otimes h) \widetilde{\lambda} = (g,h,1) \lambda;
\]
\[
(\widehat{k}) \widetilde{\lambda} = (1,1,k) \lambda.
\]
\end{prop}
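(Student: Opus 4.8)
The plan is to invoke the universal (von Dyck) property of Ellis's presentation of $G \otimes^q G$. Recall that $G \otimes^q G$ is generated by the symbols $g \otimes h$ and $\widehat{k}$ subject only to the relations \eqref{qts1}--\eqref{qts6}. First I would define a set map on these generators by $g \otimes h \mapsto (g,h,1)\lambda$ and $\widehat{k} \mapsto (1,1,k)\lambda$, and extend it to a homomorphism $\theta$ from the free group $F$ on the generating symbols into $L$. By von Dyck's theorem, $\theta$ descends to the desired homomorphism $\widetilde{\lambda}\colon G\otimes^q G \to L$ precisely when $\theta$ carries every defining relator to $1$, that is, when the two sides of each of \eqref{qts1}--\eqref{qts6} have equal image under $\theta$. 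Uniqueness is then automatic, since $\widetilde{\lambda}$ is prescribed on a generating set.

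The core of the argument is the term-by-term matching of the six defining relations with the six axioms of Definition~\ref{def:qbide}. Concretely, I expect the correspondence \eqref{qts2}$\leftrightarrow$\eqref{relqbide}, \eqref{qts1}$\leftrightarrow$\eqref{relqbide2}, \eqref{qts3}$\leftrightarrow$\eqref{relqbide3}, \eqref{qts4}$\leftrightarrow$\eqref{relqbide4}, \eqref{qts5}$\leftrightarrow$\eqref{relqbide5}, \eqref{qts6}$\leftrightarrow$\eqref{relqbide6}. For the relations not involving the $G$-action the verification is immediate: applying $\theta$ to \eqref{qts3}, \eqref{qts5} and \eqref{qts6}, and reading $g\otimes h$ as $(g,h,1)\lambda$ and $\widehat{k}$ as $(1,1,k)\lambda$, reproduces \eqref{relqbide3}, \eqref{relqbide5} and \eqref{relqbide6} (note that the conjugation $(g\otimes h)^{\widehat k}$ in \eqref{qts3} becomes the conjugation $((1,1,k)\lambda)^{-1}(g,h,1)\lambda\,(1,1,k)\lambda$ on the left of \eqref{relqbide3}). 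For the bilinearity relations \eqref{qts1} and \eqref{qts2} one must first interpret the generators $g^{h_1}\otimes h^{h_1}$ as honest tensor symbols via the $G$-action $(g\otimes h)^x = g^x\otimes h^x$ recorded in Remark~\ref{rem:action}; after this substitution they reduce, upon specialising the third slot to $1$, to the axioms \eqref{relqbide2} and \eqref{relqbide}.

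The step I expect to be the main obstacle is relation \eqref{qts4}, because of the product $\prod_{i=1}^{q-1}(k\otimes k_1^{-i})^{k^{q-1-i}}$. Here I would apply $\theta$ to each factor and use the $G$-action to rewrite $(k\otimes k_1^{-i})^{k^{q-1-i}} = k^{k^{q-1-i}}\otimes (k_1^{-i})^{k^{q-1-i}}$; the key simplification is that $k$ commutes with every power of itself, so that $k^{k^{q-1-i}} = k$ and the factor becomes $k\otimes (k_1^{-i})^{k^{q-1-i}}$, whose image is exactly the $i$-th factor $(k,(k_1^{-i})^{k^{q-1-i}},1)\lambda$ appearing in \eqref{relqbide4}. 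Matching the boundary factors $\widehat k,\widehat{k_1}$ with $(1,1,k)\lambda,(1,1,k_1)\lambda$ then completes the identification, so that $\theta$ kills the relator coming from \eqref{qts4} and $\widetilde{\lambda}$ is well defined. Once all six relators are shown to lie in $\Ker\theta$, the induced map $\widetilde{\lambda}$ satisfies the two stated identities by construction, and its uniqueness follows from its being determined on generators.
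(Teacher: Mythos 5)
Your proposal is correct and takes essentially the same approach as the paper: the paper states only that the result ``is easy to see'' from the defining relations, and your von Dyck argument---mapping generators by $g\otimes h\mapsto (g,h,1)\lambda$, $\widehat{k}\mapsto(1,1,k)\lambda$ and checking each of \eqref{qts1}--\eqref{qts6} against the matching axiom among \eqref{relqbide}--\eqref{relqbide6}---is exactly the verification the paper leaves implicit. Your treatment of \eqref{qts4}, using that $k^{k^{q-1-i}}=k$ so each factor is literally the generator indexed by $(k,(k_1^{-i})^{k^{q-1-i}})$, correctly explains why no conjugate appears in \eqref{relqbide4}.
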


To ease reference we include the next Lemma, which relates the $q$-exterior square of $G$ and the second homology group
$H_2(G, \Z_q)$ with an arbitrary free presentation $F/R$ of $G$ (see \text{\cite{McDermott}} and also \cite{ER89}).

\begin{lem}  \label{lem:liv}
Let $F/R$ be a free presentation for the group $G$. Then,
\[
G \wedge^q G \cong \frac{F^\prime F^q}{[ R, F] R^q} \text{ and } H_2(G, \Z_q) \cong \dfrac{R \cap F'F^q}{ [R, F] R^q}.
\]
\end{lem}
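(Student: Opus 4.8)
The plan is to derive both isomorphisms at once by comparing the sequence \eqref{eq:brown} for $G$ with the analogous sequence for the free group $F$, using crucially that a free group carries no second homology with $\Z_q$-coefficients. Fix the free presentation $1 \to R \to F \xrightarrow{\pi} G \to 1$. Since $R \trianglelefteq F$, each of $F'$, $F^q$, $[R,F]$, $R^q$ is normal in $F$, so $F'F^q \trianglelefteq F$ and $L := F'F^q/[R,F]R^q$ is a well-defined group. I would begin with the observation that $H_2(F,\Z_q)=0$: as $F$ is free, $H_2(F,\Z)=0$ and $H_1(F,\Z)=F^{\mathrm{ab}}$ is free abelian, so the universal coefficient theorem gives $H_2(F,\Z_q)=0$. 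Feeding this into \eqref{eq:brown} for $F$ shows that the induced epimorphism $\varrho_F : F \wedge^q F \to F'F^q$ (the map of \eqref{eq:varrho} read on the exterior square) is in fact an \emph{isomorphism}.

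Next I would invoke functoriality of the $q$-exterior square. The defining relations \eqref{qts1}--\eqref{qts6} are natural, so $\pi$ induces a homomorphism $\pi_* : F \wedge^q F \to G \wedge^q G$, $f_1 \wedge f_2 \mapsto \pi(f_1)\wedge\pi(f_2)$, $\widehat f \mapsto \widehat{\pi(f)}$, which is surjective because $\pi$ is. Setting $\psi := \pi_*\circ\varrho_F^{-1} : F'F^q \twoheadrightarrow G\wedge^q G$, the entire lemma collapses to the single claim $\ker\psi=[R,F]R^q$. The inclusion $[R,F]R^q\subseteq\ker\psi$ is routine: for $r\in R$ and $f\in F$ we have $\varrho_F^{-1}([r,f])=r\wedge f$ and $\varrho_F^{-1}(r^q)=\widehat r$, so $\psi([r,f])=\pi(r)\wedge\pi(f)=1\wedge\pi(f)=1$ and $\psi(r^q)=\widehat{\pi(r)}=\widehat 1=1$; as these two families generate $[R,F]R^q$, the subgroup lies in $\ker\psi$ and $\psi$ descends to a surjection $\bar\psi : L \twoheadrightarrow G\wedge^q G$.

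The hard part is the reverse inclusion, i.e.\ the injectivity of $\bar\psi$, and I would establish it by building an explicit inverse from the universal property in Proposition~\ref{qbide}. Choose a lift $\tilde g\in F$ for each $g\in G$ and define $\lambda : G\times G\times G\to L$ on the distinguished triples by $(g,h,1)\lambda=[\tilde g,\tilde h]\,[R,F]R^q$ and $(1,1,k)\lambda=\tilde k^{\,q}\,[R,F]R^q$. The decisive point is independence of the chosen lifts: replacing $\tilde g$ by $\tilde g r$ ($r\in R$) alters $[\tilde g,\tilde h]$ only by a factor of $[R,F]$ via $[ab,c]=[a,c]^b[b,c]$, while in $F/[R,F]$ the image of $R$ is central, so $(\tilde k r)^q\equiv \tilde k^{\,q}r^q\equiv\tilde k^{\,q}\pmod{[R,F]R^q}$. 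One then verifies the $q$-biderivation axioms \eqref{relqbide}--\eqref{relqbide6} in $L$; each reduces to a commutator or power identity in $F$ read modulo $[R,F]R^q$, the axioms \eqref{relqbide5} and \eqref{relqbide6} being immediate from $\widetilde{[g,h]}=[\tilde g,\tilde h]$ and $\widetilde{k^q}=\tilde k^{\,q}$, and \eqref{relqbide}--\eqref{relqbide4} following from $[ab,c]=[a,c]^b[b,c]$ and the standard expansion of $(ab)^q$. This verification, together with the bookkeeping that all unwanted terms genuinely land in $[R,F]R^q$, is the main obstacle. Granting it, Proposition~\ref{qbide} produces $\widetilde\lambda : G\otimes^q G\to L$; since $(g\otimes g)\widetilde\lambda=[\tilde g,\tilde g]\,[R,F]R^q=1$, it kills $\nabla^q(G)$ and induces $\Theta : G\wedge^q G\to L$. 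On the generators $\Theta(\bar\psi([\tilde g,\tilde h]\,[R,F]R^q))=\Theta(g\wedge h)=[\tilde g,\tilde h]\,[R,F]R^q$ and similarly on the $\widehat k$, whence $\Theta\circ\bar\psi=\mathrm{id}_L$. Thus $\bar\psi$ is injective, hence an isomorphism, and $G\wedge^q G\cong F'F^q/[R,F]R^q$.

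The second isomorphism then follows formally. Under $\bar\psi$ the map $\varrho_G : G\wedge^q G\to G'G^q$ of \eqref{eq:brown} corresponds to the natural surjection $\bar\pi : F'F^q/[R,F]R^q\to G'G^q$ induced by $\pi$, since both send the class of $[\tilde g,\tilde h]$ to $[g,h]$ and that of $\tilde k^q$ to $k^q$; that is, $\varrho_G\circ\bar\psi=\bar\pi$. The kernel of $\bar\pi$ is $(R\cap F'F^q)/[R,F]R^q$, while by \eqref{eq:H2} and \eqref{eq:brown} the kernel of $\varrho_G$ is $H_2(G,\Z_q)$. Since $\bar\psi$ is an isomorphism carrying one kernel onto the other, $H_2(G,\Z_q)\cong (R\cap F'F^q)/[R,F]R^q$, completing the proof.
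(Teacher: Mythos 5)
The paper does not actually prove Lemma~\ref{lem:liv}: it is quoted as known, with pointers to McDermott's thesis and to Ellis--Rodr\'{\i}guez-Fern\'andez, so there is no in-text argument to compare yours against. On its own terms your proof is correct and is essentially the standard derivation of the mod-$q$ Hopf formula: the reduction of everything to the single surjection $\bar\psi\colon F'F^q/[R,F]R^q\twoheadrightarrow G\wedge^q G$ via $H_2(F,\Z_q)=0$ and \eqref{eq:brown}, the construction of the inverse through the universal property of Proposition~\ref{qbide}, and the formal deduction of the $H_2$ statement by matching kernels of $\varrho_G$ and of the induced map onto $G'G^q$ are all sound, and your use of \eqref{eq:brown} and \eqref{eq:H2} is legitimate within the paper's logical setup, where these are taken from \cite{Brown} independently of the Hopf formula. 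Two small points deserve attention. First, Proposition~\ref{qbide} requires $\lambda$ on all of $G\times G\times G$ (axioms \eqref{relqbide}--\eqref{relqbide2} involve triples $(g,h,k)$ with $k\neq 1$), so you should state the extension $(g,h,k)\lambda=[\tilde g,\tilde h]\,\tilde k^{\,q}\,[R,F]R^q$ explicitly; this is exactly the map of Remark~\ref{obs9}(ii) read in $F/[R,F]R^q$. Second, the verification of the axioms, which you flag as the main obstacle, can be short-circuited by a remark you have already earned: since $\varrho_F\colon F\wedge^q F\to F'F^q$ is an isomorphism carrying $f_1\wedge f_2$ to $[f_1,f_2]$ and $\widehat f$ to $f^q$, every defining relation \eqref{qts1}--\eqref{qts6} of $F\otimes^q F$ becomes a valid identity among commutators and $q$-th powers in the free group $F$; these identities descend to $L$, and together with the lift-independence you checked (which uses only that $R/[R,F]R^q$ is central of exponent dividing $q$) they give the biderivation axioms with no further computation.
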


Thus we have $H_2(G, \Z_q) \cong (G \wedge^q G) \cap M^q(G),$ where $M^q(G) = R/ [R, F] R^q$ is the
$q$-{\em multiplier}.

Notice that when $F$ is a free group, then we find that $F \wedge^q F \cong F^\prime F^q.$ A similar result is
also valid for projective q-crossed $G$-modules:

\begin{prop} \cite[Proposition 1.3.11]{McDermott} \label{qiso} Let $\delta : M \rightarrow G$ be a projective $q$-
crossed $G$-module
and let $F/R$ be a free presentation of $G$, with $\pi : F \rightarrow G$ being the natural epimorphism. Then there
exists an isomorphism
\[
M' M^q \stackrel{\cong}{\rightarrow} F^\prime F^q/ [R, F] R^q,
\]
such that $[m, m']^\delta = [f,f'] [R, F] R^q$ and $(m^q)^\delta=f^q [F,R] R^q$, where $(m)^\delta = (f)^\pi$.
\end{prop}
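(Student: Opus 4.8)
The plan is to reduce the statement to the case of a \emph{free} $q$-crossed module and then to identify the commutator-power subgroup of the free object with $F'F^q/[R,F]R^q$ by means of the presentation of the $q$-tensor square. The device that makes the reduction work is a rigidity property which I would isolate first: in any $q$-crossed module $\delta\colon M\to G$ the Peiffer identity $m^{\delta(m')}=m'^{-1}mm'$ forces $\Ker\delta\leq Z(M)$ (take $m'\in\Ker\delta$, so that the action of $\delta(m')=1$ is trivial and $m'$ centralizes $M$), and the defining $q$-structure annihilates the $q$-operation on these central Peiffer elements, i.e. $z^q=1$ for $z\in\Ker\delta$. Granting this, any morphism $\psi\colon M\to M$ of $q$-crossed $G$-modules lying over $\mathrm{id}_G$ satisfies $\psi(m)=m\,z_m$ with $z_m\in\Ker\delta$ central, whence $\psi([m,m'])=[m,m']$ and $\psi(m^q)=m^q z_m^{\,q}=m^q$; that is, every such endomorphism restricts to the identity on $M'M^q$.

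Next I would exploit projectivity. Writing $M$ as a retract of a free $q$-crossed $G$-module $\mathcal F\xrightarrow{\partial}G$, with section $s\colon M\to\mathcal F$ and retraction $r\colon\mathcal F\to M$ over $\mathrm{id}_G$ and $rs=\mathrm{id}_M$, the composite $sr$ is an endomorphism of $\mathcal F$ over $\mathrm{id}_G$; by the previous paragraph $sr$ is the identity on $\mathcal F'\mathcal F^q$. Combined with $rs=\mathrm{id}$ on $M'M^q$, this shows that the restriction $s\colon M'M^q\to\mathcal F'\mathcal F^q$ is bijective. Hence it suffices to establish the proposition for the free $q$-crossed module $\mathcal F$, i.e. to produce an isomorphism $\mathcal F'\mathcal F^q\cong F'F^q/[R,F]R^q$ compatible with the boundary maps. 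Here I take $\mathcal F$ to be the free $q$-crossed $G$-module on the generating set underlying the presentation $F/R$, so that $\partial$ is onto $G$ and the chosen preimages provide, for each $m$ with $(m)^\delta=(f)^\pi$, the lift $f\in F$ appearing in the statement.

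For the free case I would argue through the universal property of the $q$-tensor square. A consistent bookkeeping of the generators of $\mathcal F$ against the symbols $g\otimes h$ and $\widehat k$ yields a $q$-biderivation $G\times G\times G\to\mathcal F'\mathcal F^q$, so Proposition~\ref{qbide} gives a homomorphism $G\otimes^q G\to\mathcal F'\mathcal F^q$; in the opposite direction $F\wedge^q F\cong F'F^q$ together with Lemma~\ref{lem:liv} ($G\wedge^q G\cong F'F^q/[R,F]R^q$) supplies the comparison map, and one checks the two are mutually inverse on generators, sending $[m,m']\mapsto[f,f']\,[R,F]R^q$ and $m^q\mapsto f^q\,[R,F]R^q$ as required. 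The main obstacle is concentrated in the two genuinely $q$-flavoured points that are vacuous when $q=0$: verifying that the Peiffer kernel is annihilated by the $q$-operation (the input to the rigidity lemma) and checking that the bookkeeping above respects the awkward relation~\eqref{qts4} (equivalently~\eqref{RR4}); both amount to controlling how $q$-th powers interact with the crossed-module action, and this is where the bulk of the computation lives.
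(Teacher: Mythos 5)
You should first be aware that the paper does not prove this statement at all: it is imported verbatim from McDermott's thesis (Proposition 1.3.11), just as Lemma~\ref{lem:liv} and Proposition~\ref{resu} are, so there is no in-paper argument to compare yours against. Judged on its own terms, your outline follows what is almost certainly the standard route (and very plausibly McDermott's): kernel rigidity plus ``projective objects are retracts of free ones,'' then a Hopf-type identification in the free case. The retract step is correct and cleanly argued. One simplification worth noting: the paper already records (Proposition~\ref{resu}) that $E_q(G)=F/[R,F]R^q\to G$ is itself a projective $q$-crossed $G$-module, and its commutator--power subgroup is literally $F'F^q/[R,F]R^q$; so once you have your rigidity lemma, comparing $M$ directly with $E_q(G)$ (two projectives over the same $G$, each mapping to the other over $\mathrm{id}_G$, with both composites equal to the identity on the respective subgroups $X'X^q$) finishes the proof without any detour through free objects, the $q$-tensor square, or Lemma~\ref{lem:liv}. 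This also makes the explicit formula $[m,m']\mapsto [f,f'][R,F]R^q$, $m^q\mapsto f^q[R,F]R^q$ immediate.

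Two points in your write-up are genuine gaps as stated. First, the load-bearing fact that $z^q=1$ for $z\in\Ker\delta$ is asserted, not derived: the Peiffer identity only gives centrality. It does hold, but you must pull it out of the $q$-crossed-module axioms explicitly, namely $\{\delta m\}=m^q$ together with $\{1\}=1$ (the latter from the product rule for $\{kk_1\}$ with $k=k_1=1$); without naming that axiom the rigidity lemma is unsupported, and the computation $\psi(m^q)=(mz_m)^q=m^qz_m^q=m^q$ collapses. Second, in the free case you quietly use two different free objects: the free $q$-crossed $G$-module covering $M$ (needed for the retraction) and ``the free $q$-crossed $G$-module on the generating set underlying $F/R$'' (needed to see $F'F^q/[R,F]R^q$). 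These need not coincide, so you need one more application of the same rigidity argument to show the answer is independent of the choice --- or, better, replace the free object by $E_q(G)$ as above. Your construction of the $q$-biderivation $(g,h,k)\mapsto[\widetilde g,\widetilde h]\,\widetilde k^{\,q}$ and the verification of the analogue of \eqref{qts4} do work (the relevant identity $(ab)^q=a^q\prod_{i=1}^{q-1}[a,b^{-i}]^{a^{q-1-i}}b^q$ is a universal group identity), but well-definedness of the lifts again rests on exactly the $z^q=1$ fact you left unproved, and you also implicitly assume $\delta$ is surjective, which should be stated as part of the hypothesis on projective $q$-crossed $G$-modules.
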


\section{Consistent polycyclic presentations for the groups
$E_q(G)$ and $\mathfrak{E}^q(G)$,
$q$-central extensions of $G$}
\label{gast}

 In this section we describe a method for computing consistent polycyclic presentations for certain $q$-central
extensions of a polycyclic group $G$ given by a consistent polycyclic presentation. Our method is a generalization of
the one given by Eick and Nickel in \cite{EN} for the case $q = 0$.

Let $G$ be a polycyclic group defined by a consistent polycyclic presentation $F_n/R$, where $F_n$ is the free group
in the generators $g_1,...,g_n$ and let $H$ be a finitely presented group defined by a finite presentation $F_m/S$,
where $F_m$ is the free group on the generators $f_1,...,f_m$. For our purposes we shall assume that $m \leq n$.
Suppose that  $\xi : H \rightarrow G$ is an epimorphism, such that $(f_i)\xi = w_i, \; 1 \leqslant i \leqslant m,$
where $w_i$ is a word in the generators $g_1, \ldots, g_n$. Denote by $K/S$ the kernel $\Ker \xi.$
Thus, $G \cong F_m/K$.
Define the groups
 \[
E_q(G):= \dfrac{F_n}{R^q[F_n,R]}
\]
and
\[
\mathfrak{E}^q(G):=\dfrac{F_m}{K^q[K, F_m]S}
\]
which, by construction, are $q$-central extensions of $G$.

The following result in the context of crossed modules will be helpful.

\begin{prop} \cite[Lemma 5.2.2]{McDermott} \label{resu} With the above definition, the natural epimorphism
$\pi : E_q(G) \rightarrow G$ is a projective $q$-crossed $G$-module.
 \end{prop}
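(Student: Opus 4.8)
The plan is to verify that the natural epimorphism $\pi : E_q(G) \rightarrow G$ satisfies the axioms of a projective $q$-crossed $G$-module, and to do so by exploiting the fact that $E_q(G) = F_n/R^q[F_n,R]$ is built directly from a \emph{free} presentation $F_n/R$ of $G$. First I would recall the definition of a $q$-crossed $G$-module $\delta : M \to G$: it is a $G$-module structure on $M$ (an action of $G$ on $M$) together with a homomorphism $\delta$ compatible with the $G$-action ($(m^g)\delta = (m\delta)^g$), satisfying the Peiffer-type identity $m^{(m'\delta)} = (m^{q}) \cdot (\text{something}) \cdot m'$ appropriately modified by the $q$-th power term encoding the $\widehat{\phantom{k}}$-relations; the precise $q$-crossed module axioms are those used implicitly in Proposition~\ref{qiso} and in Definition~\ref{def:nuq}. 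The key structural observation is that $R^q[F_n,R]$ is a normal subgroup of $F_n$ (indeed $[F_n,R]$ is normal and $R^q$ is verbal hence characteristic, so their product is normal), whence $R/R^q[F_n,R]$ lands in the centre of $E_q(G)$ and in the kernel of $\pi$; this is exactly the assertion that $E_q(G)$ is a $q$-central extension, already recorded in the construction.

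The core of the argument is to define the $G$-action on $M := E_q(G)$. Since $G \cong F_n/R$ acts on the central quotient $R/R^q[F_n,R]$ by conjugation and this action factors through $G$ (because $[R, R] \leq [F_n, R]$ and $R$ acts trivially modulo $[F_n,R]$), the conjugation action of $F_n$ on $E_q(G)$ descends to a well-defined action of $G = F_n/R$. With $\delta = \pi$ (or more precisely $\delta$ taken to be the inclusion-induced map on the relevant subgroup $M'M^q$), I would then check the two crossed-module axioms directly on generators: the equivariance $(m^g)\pi = (m\pi)^g$ is immediate since $\pi$ is induced by the quotient $F_n \to G$ and conjugation is preserved; the Peiffer identity reduces, modulo $R^q[F_n,R]$, to the commutator identity $[x,y]^q \equiv \widehat{[\bar x,\bar y]}$ and the power relations, which is precisely the content that $R^q[F_n,R]$ was chosen to kill. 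The \emph{projectivity} then follows because $F_n$ is free: a free presentation produces a free, hence projective, object in the category of $q$-crossed modules over $G$, and the universal mapping property of $F_n$ lifts homomorphisms, so $\pi$ is projective in the required sense.

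I would organize the proof as: (i) show $R^q[F_n,R] \trianglelefteq F_n$ and that $\pi$ is well defined with central kernel $R/R^q[F_n,R]$; (ii) define the $G$-action and verify it is well defined via $G \cong F_n/R$; (iii) verify the $q$-crossed module axioms on generators using relations \eqref{qts5} and \eqref{qts6} transported through Lemma~\ref{lem:liv} and Proposition~\ref{qiso}; (iv) deduce projectivity from freeness of $F_n$. Since the cited source \cite[Lemma 5.2.2]{McDermott} establishes exactly this in the crossed-module language, an alternative and cleaner route is simply to identify our $E_q(G)$ with the object constructed there: one checks that $F_n/R^q[F_n,R]$ coincides with McDermott's projective $q$-crossed module attached to the free presentation $F_n/R$, and then the result is immediate by citation.

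The hard part will be step (iii), the verification of the Peiffer/$q$-crossed identity, because the $q$-twist means the naive crossed-module relation must be corrected by the $q$-th power terms; one must confirm that reducing modulo $R^q[F_n,R]$ makes these correction terms vanish (equivalently, that the relations \eqref{RR5} and \eqml{RR6} are forced in the quotient). The subtlety is entirely combinatorial and lives in the normal subgroup $R^q[F_n,R]$: I expect that once it is shown that every Peiffer commutator $[x, y^{-1} r y]$ for $r \in R$ lies in $R^q[F_n,R]$, the remaining axioms are routine. Given that Proposition~\ref{qiso} already packages the isomorphism $M'M^q \cong F'F^q/[R,F]R^q$, the safest presentation is to reduce the whole statement to \cite[Lemma 5.2.2]{McDermott} by matching constructions, avoiding the delicate Peiffer calculation altogether.
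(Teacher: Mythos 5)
The paper does not prove this proposition at all: it is imported verbatim as a citation to \cite[Lemma 5.2.2]{McDermott}, so any self-contained argument you give is by definition a different route from the paper's. Your sketch points in a reasonable direction, but as written it has two genuine gaps. First, you never actually state the axioms of a $q$-crossed $G$-module; the placeholder ``$m^{(m'\delta)} = (m^{q}) \cdot (\text{something}) \cdot m'$'' is doing all the work in your step (iii), and without the precise definition (a crossed module $\delta\colon M \to G$ together with a map $k \mapsto \widehat{k}$ satisfying the $q$-analogues of the Peiffer identities, as in \cite{Conduche} and \cite{Ellis}) there is nothing concrete to check. In particular the verification is not merely the classical Peiffer identity reduced modulo $R^q[F_n,R]$: one must also exhibit the hat-map on $E_q(G)$ and verify the relations corresponding to \eqref{RR4}--\eqref{RR6}, which is exactly where the $q$-twist lives and which your sketch defers. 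Second, projectivity is asserted rather than proved: ``$F_n$ is free, hence the object is projective'' requires showing that $E_q(G)=F_n/R^q[F_n,R]$ has the lifting property against surjections of $q$-crossed $G$-modules; freeness of $F_n$ gives a lift at the level of groups, but you must still check that the lifted map is a morphism of $q$-crossed modules and that it factors through the quotient by $R^q[F_n,R]$.

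Your own suggested escape hatch --- identify $F_n/R^q[F_n,R]$ with the object McDermott attaches to the free presentation $F_n/R$ and quote \cite[Lemma 5.2.2]{McDermott} --- is precisely what the paper does, and for the purposes of this paper it is the right call, since the proposition is only used downstream through Proposition~\ref{qiso} and Lemma~\ref{lem2}. If you want a self-contained proof, the two points above are where the real work is; the normality of $R^q[F_n,R]$ and the centrality of $R/R^q[F_n,R]$, which you treat carefully, are the easy parts.
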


The relations of a consistent polycyclic presentation $F_n/R$ have the form:

 \begin{itemize}
 	\item[]\hspace{4cm} \hspace{0.95cm}$ g_i^{e_i}= g_{i+1}^{\alpha_{i,i+1}}... g_n^{\alpha_{i,n}}$ for $i \in I$,
 	\item[] \hspace{4cm}$g_j^{-1}g_ig_j=g_{j+1}^{\beta_{i,j,j+1}}...g_n^{\beta_{i,j,n}}$ para $j<i$,
 	\item[]\hspace{4cm} $g_jg_ig_j^{-1}=g_{j+1}^{\gamma_{i,j,j+1}}...g_n^{\gamma_{i,j,n}}$ para $j<i$ e
 	$j \notin I$,
 \end{itemize}
 for some set $I \subseteq \{1,...,n\}$, certain exponents $e_i \in \N$ for $i \in I$, and $\alpha_{i,j}$,
$\beta_{i,j,k}$, $\gamma_{i,j,k} \in \Z,$ for all $i$, $j$ and $k$. To ease notation we shall write the defining
relations of $G$ as relators, in the form $r_1,....,r_l$. Thus, each relator $r_j$ is a word in the generators
$g_1,...,g_n$; that is, $r_j = r_j(g_1,...,g_n)$.

 We now introduce $l$ new generators $t_1,...,t_l,$ one for each relator $r_j$, and define a new group $\epsilon(G)$ to
be the group generated by
$g_1,...,g_n, t_1,...,t_l$, subject to the relators:
 \begin{itemize}
 	\item[(a)] $r_i(g_1,...,g_n)t_i^{-1},$ for $1\leq i\leq l$,
 	\item[(b)] $[t_i,g_j],$ for $1\leq j \leq n$, $1\leq i \leq l$,
 	\item[(c)] $[t_i,t_j],$ for $1\leq j < i \leq l$,
 	\item[(d)] $t_i^q,$ for $1\leq i\leq l$.
 \end{itemize}

 Denote by  $T_q$ the $q$-central subgroup of $\epsilon(G)$ generated by $\left\{t_1,...,t_l\right\}.$ It follows
directly from these relators that $\epsilon(G)$ is a $q$-central extension of $G$ by \index{$T_q$} $T_q$.

The following Lemma asserts that the above relations give a polycyclic presentation of
$E_q(G)$, possibly inconsistent.

 \begin{lem}\label{lem1} Let $G$ be a polycyclic group given by a consistent polycyclic presentation $F_n/R$.
Then we have:
 	$\epsilon(G) \cong F_n/R^q[R,F_n]$, $T_q \cong R/R^q[R,F_n]$ and $\epsilon(G)/T_q \cong G$.
 \end{lem}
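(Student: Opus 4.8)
The plan is to exhibit the isomorphism $\epsilon(G) \cong E_q(G)$ directly, by constructing mutually inverse homomorphisms, after which the remaining two assertions follow by tracking the image of $T_q$. Throughout I write $E_q(G) = F_n/R^q[R,F_n]$, using $[F_n,R] = [R,F_n]$, and recall that since $F_n/R$ is a (polycyclic) presentation, $R$ is the normal closure in $F_n$ of the relators $r_1,\dots,r_l$.

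First I would define a homomorphism $\Phi \colon \epsilon(G) \to E_q(G)$ on generators by $g_i \mapsto g_i\,R^q[R,F_n]$ and $t_i \mapsto r_i\,R^q[R,F_n]$, and check that the four families of defining relators of $\epsilon(G)$ are respected. Relator (a) holds because $r_i(g_1,\dots,g_n)$ and $t_i$ have the same image $r_i R^q[R,F_n]$ by construction; relators (b) and (c) hold because $[r_i,g_j]$ and $[r_i,r_j]$ lie in $[R,F_n]$; and relator (d) holds because $r_i^q \in R^q$. Hence $\Phi$ is well defined, and it is clearly surjective since the $g_i$ generate $E_q(G)$.

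For the inverse I would first define $\Psi_0 \colon F_n \to \epsilon(G)$ by $g_i \mapsto g_i$ and then argue that it factors through $E_q(G)$. The decisive observation---and the step I expect to need the most care---is that $\Psi_0$ carries $R$ into the subgroup $T_q$: each relator $r_i$ maps to $r_i(g_1,\dots,g_n) = t_i$ by relator (a), and since relators (b) and (c) make $T_q$ central in $\epsilon(G)$, the normal closure of $\{r_i\}$ (which is $R$) maps into $\langle t_1,\dots,t_l\rangle = T_q$. Because $T_q$ is central, $[R,F_n]$ maps to $[T_q,\epsilon(G)] = 1$; because $T_q$ is abelian of exponent dividing $q$ (relators (c), (d)), $R^q$ maps to $T_q^q = 1$. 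Thus $\Psi_0$ kills $R^q[R,F_n]$ and induces $\Psi \colon E_q(G) \to \epsilon(G)$, $g_i \mapsto g_i$.

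Finally I would verify that $\Phi$ and $\Psi$ are mutually inverse by checking them on generators: $\Psi\Phi$ fixes each $g_i$ and sends $t_i \mapsto r_i \mapsto r_i(g_1,\dots,g_n) = t_i$, while $\Phi\Psi$ fixes each $g_i$; since these generate the respective groups, both composites are the identity and $\epsilon(G) \cong E_q(G)$. To read off the remaining claims, note that $\Phi(T_q)$ is generated by the images $\bar r_i = r_i R^q[R,F_n]$; as conjugation becomes trivial modulo $[R,F_n]$, these generate $R/R^q[R,F_n]$, giving $T_q \cong R/R^q[R,F_n]$; and then $\epsilon(G)/T_q \cong (F_n/R^q[R,F_n])/(R/R^q[R,F_n]) \cong F_n/R = G$. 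The main obstacle is really the well-definedness of $\Psi$, i.e.\ confirming that the \emph{entire} relation subgroup $R^q[R,F_n]$ dies in $\epsilon(G)$; everything else reduces to a routine check on generators.
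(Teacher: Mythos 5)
Your proposal is correct and follows essentially the same route as the paper: the paper's map $\sigma\colon F_n\to\epsilon(G)$ is your $\Psi_0$ (with the same key observation that $R$ lands in the central subgroup $T_q$ of exponent dividing $q$, forcing $R^q[R,F_n]\le\Ker\sigma$), and the paper's epimorphism $\epsilon(G)\to F_n/R^q[F_n,R]$ is your $\Phi$. You merely make explicit the generator-by-generator verification that the two maps are mutually inverse, which the paper leaves implicit.
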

 \begin{proof} It follows by relations $(a)$ above that $\epsilon(G)/T_q \cong  G$, while by relations $(b)$, $(c)$ and
$(d)$ we immediately see that $T_q$ is a $q$-central subgroup of $\epsilon(G).$
Define $\sigma: F_n\rightarrow \epsilon(G)$ given by $(g_i)^\sigma = g_i$, $1 \leqslant i \leqslant n$.
Relations $(a)$
imply that $\sigma$ is an epimorphism and, since $\epsilon(G)$ is a $q$-central extension of $G,$ we have that
$R^q[R,F_n] \leqslant Ker(\sigma) \leqslant R$.
 	On the other side, there exists a well defined homomorphism from $\epsilon(G)$ to $\frac{F_n}{R^q[F_n,R]}$,
which is an epimorphism. Consequently, $Ker(\sigma) \leq R^q[R,F_n] \leq Ker(\sigma)$ and thus,
$\epsilon(G) \cong \frac{F_n}{R^q[F_n,R]}$. Therefore,
$\epsilon(G) \cong E_q(G)$, where we get $T_q\cong \frac{R}{R^q[R,F_n]}$.
 \end{proof}

  By using an adaptation of the method described by Eick and Nickel in \cite{EN} (see also \cite[p. 424]{Sims}) we can
determine a consistent polycyclic presentation for $E_q(G)$ from the
(possibly inconsistent) polycyclic presentation
 given by Lemma~\ref{lem1}. We then get a consistent polycyclic presentation for $E_q(G)$
in the generators $g_1,...,g_n, t_1,...,t_l$ with the following relations:
 \begin{itemize}
 	\item[(1)] $r_i(g_1,...,g_n)t_1^{q_{i1}}...t_l^{q_{il}}$, for $1\leq i\leq l$;
 	\item[(2)] $[t_i,g_j]$, for $1 \leq i \leq n$, $1\leq j \leq l$;
 	\item[(3)] $[t_i, t_j],$ for $1\leq j < i \leq l$;
    \item[(4)] $t_i^{d_i},$ for $1\leq i\leq l,$ with $d_i \mid q,$
 \end{itemize}
where $(q_{ij})_{1 \leq i, j \leq l}$ is an appropriate invertible matrix over $\mathbb{Z}$.
 It may happen that $d_i=1$ for some $i \in \{1,...,l\}$. In this case the corresponding generator
$t_i$ is redundant and can be removed.

 Bellow we give a couple of simple examples in order to illustrate these results. The same examples will be
used in subsequent sections.
 \begin{ex} \label{ex3.1}
 First we consider the symmetric group $S_3$, given by the consistent polycyclic presentation
$$S_3 = \left\langle g_1, g_2 \mid g_1^2=1,  g_1^{-1}g_2g_1=g_2^2, g_2^3=1\right\rangle.$$
According to the definition we have, say for $q = 2$:
    \[
 	E_2(S_3)  = \left\langle g_1, g_2, t_1, t_2, t_3  \mid g_1^2 = t_1,
 	g_1^{-1}g_2g_1 = g_2^2t_2,  g_2^3 = t_3,
    t_1^2 = 1,   t_2^2 = 1,  t_3^2 = 1
    \right\rangle,
    \]
   where $t_1, t_2, t_3$ are  central.
  Checking for consistency we find that $t_2=1$. Thus, a consistent polycyclic presentation of $E_2(S_3)$ is
\[
E_2(S_3)= \left\langle g_1,g_2, t_1, t_3 {\cal j}g_1^2=t_1,
g_1^{-1}g_2g_1=g_2^2,  g_2^3=t_3,  t_1^2=1,  t_3^2=1;  \left(t_1,t_3 \text{-central}\right) \right\rangle.
\]
 \end{ex}

 \begin{ex}\label{ex3.2}  In this second example we consider the infinite dihedral group, given by the following
consistent polycyclic presentation:
 \[
 D_\infty = \left\langle g_1, g_2{\cal j} g_1^2=1,\ g_1^{-1}g_2g_1=g_2^{-1}\right\rangle.
 \]
 From this we get, for an arbitrary $q \geqslant 2$,
 \[
 E_q(D_\infty)=\left\langle g_1,g_2,t_1,t_2{\cal j}g_1^2=t_1, g_1^{-1}g_2g_1=g_2^{-1}t_2,
 t_1^q=1, t_2^q=1; \left( t_1,t_2 \text{-central} \right) \right\rangle.
 \]
 Checking these relations for consistency we find that this presentation is consistent.
 \end{ex}

 Now, from the polycyclic presentation of $E_q(G)$ given earlier we can determine a presentation for
$\mathfrak{E}^q(G)$.

 \begin{lem}\label{lem2} Let $\varsigma: F_m \rightarrow E_q(G)$ given by $(f_i)\varsigma = w_i,$ for $1\leq i \leq m$,
where as before $w_i = w_i(g_1, \ldots, g_n)$ is a word in the generators $g_1, \ldots, g_n.$
Then,
 	\begin{itemize}
 		\item[(i)] $Ker(\varsigma)=[K,F_m]K^q$;
 		\item[(ii)] $\mathfrak{E}^q(G) \cong Im(\varsigma)/(S)\varsigma$.
 	\end{itemize}
 	\begin{proof}

 		(i). Notice that by definition $Im(\varsigma)$ covers $G \cong E_q(G)/T_q$ and hence
$F_m/Ker(\varsigma)$ is a $q-$central extension of $G=F_m/K$. Thus, $[K,F_m]K^q \leq Ker(\varsigma)$. On the other hand,
 $F_m/[K,F_m]K^q$ is a polycyclic $q$-central extension of $G$ and, since by construction $E_q(G)$ is the largest
 $q$-central extension of $G$ with this property (by Proposition~\ref{qiso} it is a projective $q$-crossed $G-$module;
see also \cite[Lemma 3]{EN}), it follows that $E_q(G)$ contains
 $F_m/[K,F_m]K^q$ as a sub-factor via $\varsigma$. Thus, $Ker(\varsigma)=[F_m,K]K^q$. \\
 		(ii). Now, by part $(i)$ we get that $Im(\varsigma) \cong \dfrac{F_m}{[F_m,K]K^q}$ and, by definition,
$\mathfrak{E}^q(G)=\dfrac{F_m}{S[F_m,K] K^q}$. But $(S)\varsigma=\dfrac{S[K,F_m]K^q}{[K,F_m]K^q}$;
consequently, $\dfrac{Im(\varsigma)}{(S)\varsigma}=\mathfrak{E}^q(G)$.
 	\end{proof}
 \end{lem}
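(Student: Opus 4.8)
The plan is to prove the two claims in order, treating part (i) as the substantive one and deriving part (ii) from it by a routine factor-group identification. Throughout I would work with the three maps at hand: the polycyclic presentation $\xi : F_m/K \cong G$, the central-extension epimorphism $\pi : E_q(G) \twoheadrightarrow G$ with kernel $T_q \cong R/R^q[R,F_n]$ from Lemma~\ref{lem1}, and the lift $\varsigma : F_m \to E_q(G)$ defined on generators by $(f_i)\varsigma = w_i$. The first thing to check is that $\varsigma$ is well defined and that $\pi \circ \varsigma$ realizes $\xi$ on the nose, i.e. $(f_i)^{\varsigma\pi} = w_i \mapsto (f_i)\xi$; this is immediate from the definitions but it is what pins down the kernel.

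For part (i), I would argue by a double inclusion of the kernel. For the easy inclusion $[K,F_m]K^q \leq \Ker(\varsigma)$, I would observe that $\Imm(\varsigma)$ covers $G$ (since the $w_i$ generate $G$) and that $\Imm(\varsigma)$ sits inside the $q$-central extension $E_q(G)$, so the central kernel $T_q$ is annihilated by $q$-th powers and commutators from $K$; concretely, $K^\varsigma \leq T_q$ lands in a $q$-central subgroup, whence $[K,F_m]^\varsigma = 1$ and $(K^q)^\varsigma = 1$, giving $[K,F_m]K^q \leq \Ker(\varsigma)$. This is exactly the reasoning the excerpt gestures at when it says $F_m/\Ker(\varsigma)$ is a $q$-central extension of $G = F_m/K$. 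The reverse inclusion $\Ker(\varsigma) \leq [K,F_m]K^q$ is the crux, and here I would invoke the universal/projective property rather than a direct computation: by definition $\mathfrak{E}^q(G) = F_m/K^q[K,F_m]S$ is the maximal $q$-central extension controlled by $K$, and $E_q(G) = F_n/R^q[F_n,R]$ is, by Proposition~\ref{resu} (resp.\ Proposition~\ref{qiso}), a projective $q$-crossed $G$-module — in particular the universal such object — so the $q$-central extension $F_m/[K,F_m]K^q$ must map into $E_q(G)$ via $\varsigma$ without further collapsing. That forces $\Ker(\varsigma) \leq [K,F_m]K^q$, and combined with the easy inclusion yields the claimed equality.

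For part (ii), once part (i) is in hand the identification is formal. By the first isomorphism theorem $\Imm(\varsigma) \cong F_m/\Ker(\varsigma) = F_m/[F_m,K]K^q$. I would then compute the image of the relator subgroup: $(S)\varsigma$ is the image of $S$ under the quotient $F_m \to F_m/[K,F_m]K^q$, namely $S[K,F_m]K^q/[K,F_m]K^q$. Dividing $\Imm(\varsigma)$ by this subgroup and applying the third isomorphism theorem gives $\Imm(\varsigma)/(S)\varsigma \cong F_m/S[K,F_m]K^q$, which is precisely $\mathfrak{E}^q(G)$ by definition. No new content enters beyond bookkeeping of normal closures.

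The step I expect to be the main obstacle is the reverse inclusion in part (i), because it is where one genuinely needs the \emph{maximality} of $E_q(G)$ among $q$-central extensions rather than a mechanical manipulation of relators. The delicate point is to argue that $\varsigma$ does not collapse $F_m/[K,F_m]K^q$ any further than it must — equivalently, that the projective $q$-crossed module $E_q(G)$ receives $F_m/[K,F_m]K^q$ faithfully modulo the identification of $G$. I would lean on Proposition~\ref{qiso} and \cite[Lemma 3]{EN} to supply this universality cleanly, making sure the $q$-crossed-module actions on both sides match so that the comparison map exists and is injective on the relevant subfactor; handling the $q$-th power terms $K^q$ in parallel with the commutator terms $[K,F_m]$ (rather than only the $q=0$ commutator case treated in \cite{EN}) is the one place where some care, rather than a direct citation, is required.
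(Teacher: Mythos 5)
Your proposal is correct and follows essentially the same route as the paper: the easy inclusion $[K,F_m]K^q \leq \Ker(\varsigma)$ from the $q$-centrality of the extension $F_m/\Ker(\varsigma)$ of $G$, the reverse inclusion from the maximality/projectivity of $E_q(G)$ as a $q$-crossed $G$-module via Proposition~\ref{qiso} and \cite[Lemma 3]{EN}, and part (ii) by the isomorphism theorems. Your explicit check that $[K,F_m]^\varsigma = 1$ and $(K^q)^\varsigma = 1$ merely makes precise what the paper's first step leaves implicit.
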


 Tuned in this way, in order to determine a presentation of $\mathfrak{E}^q(G)$ it suffices to determine generators for
the subgroups $Im(\varsigma)$ and $(S)\varsigma$ of $E_q(G)$, since standard methods for polycyclic groups can be used
in order to construct a consistent polycyclic presentation for the quotient $Im(\varsigma)/(S)\varsigma$ (see
also \cite[Chap. 8]{Holt}).

Certainly, a set of generators for $Im(\varsigma)$ is given by $w_1,...,w_m$. Let $s_1,...,s_k$ be a set of defining
relators for the finitely presented group $H=F_m/S$. Then, $(S)\varsigma$ is generated by
$(s_1)\varsigma,...,(s_k)\varsigma$ as a subgroup, once
$(S)\varsigma \leq T_q$ is central in $E_q(G)$. Thus, a set of generators for $(S)\varsigma$ can be determined by
evaluating the relators $s_1,...,s_k$ in $E_q(G)$.

 \section{Polycyclic presentations for the $q$-exterior square $G \wedge^q G$ and for the second homology group
$H_2(G, \Z_q)$} \label{sec:4}

According to Lemmas~\ref{lem:liv} and~\ref{lem1}, we have the following:
 \begin{cor}\label{cor1} $(i)$ $G \wedge^q G \cong E_q(G)' E_q(G)^q$;\\
\phantom{COROLLARY 4.1} $(ii)$ $H_2(G,\Z_q) \cong \left(E_q(G)'E_q(G)^q \right) \cap T_q.$
 \end{cor}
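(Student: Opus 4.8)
The plan is to apply the two isomorphisms already established in the excerpt and chain them together. First I would invoke Lemma~\ref{lem1}, which identifies $E_q(G)$ with the free presentation quotient $F_n/R^q[R,F_n]$ and identifies $T_q$ with $R/R^q[R,F_n]$. This gives me a concrete free presentation $F_n/R$ of $G$ to feed into Lemma~\ref{lem:liv}. For part $(i)$, Lemma~\ref{lem:liv} states that $G\wedge^q G\cong F^\prime F^q/[R,F]R^q$ for any free presentation $F/R$ of $G$; taking $F=F_n$ and rewriting $[R,F_n]R^q=R^q[R,F_n]$, the right-hand side is exactly $(F_n/R^q[R,F_n])^\prime(F_n/R^q[R,F_n])^q$. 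Under the isomorphism of Lemma~\ref{lem1} this is precisely $E_q(G)^\prime E_q(G)^q$, because the derived subgroup and the subgroup of $q$-th powers are preserved by any isomorphism. So $(i)$ should follow essentially formally.

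For part $(ii)$ I would likewise start from Lemma~\ref{lem:liv}, which gives $H_2(G,\Z_q)\cong (R\cap F^\prime F^q)/[R,F]R^q$ for the presentation $F/R$. Taking $F=F_n$ and $R$ as above, the numerator $R\cap F_n^\prime F_n^q$, read inside $E_q(G)=F_n/R^q[R,F_n]$, becomes the intersection of the image of $R$ with the image of $F_n^\prime F_n^q$. By Lemma~\ref{lem1} the image of $R$ is exactly $T_q$, and the image of $F_n^\prime F_n^q$ is exactly $E_q(G)^\prime E_q(G)^q$, so the quotient is $\bigl(E_q(G)^\prime E_q(G)^q\bigr)\cap T_q$, which is the asserted description. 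The key point is that passing to the quotient by $R^q[R,F_n]$ carries the intersection of the two relevant subgroups to the intersection of their images; here this is legitimate because both subgroups contain the normal subgroup $R^q[R,F_n]=[R,F_n]R^q$ being factored out (note $[R,F_n]R^q\leq R$ and $[R,F_n]R^q\leq F_n^\prime F_n^q$ since $R^q\leq F_n^q$ and $[R,F_n]\leq F_n^\prime$).

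The main obstacle I anticipate is precisely this last bookkeeping step: verifying that forming the intersection commutes with the quotient map in this situation. In general $\pi(A\cap B)\subsetneq \pi(A)\cap\pi(B)$, so I must justify equality rather than mere containment. The justification rests on the \emph{modular law}: since the kernel $N:=[R,F_n]R^q$ satisfies $N\leq R\cap F_n^\prime F_n^q$ and in particular $N\leq R$, we have $\pi(R)\cap\pi(F_n^\prime F_n^q)=\pi\bigl(R\cap (F_n^\prime F_n^q)N\bigr)=\pi(R\cap F_n^\prime F_n^q)$, using $N\leq F_n^\prime F_n^q$ to absorb $N$. I would spell this out carefully to confirm that the isomorphism of Lemma~\ref{lem:liv} matches the subfactor $\bigl(E_q(G)^\prime E_q(G)^q\bigr)\cap T_q$ on the nose. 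Everything else is a direct transport of structure along the isomorphism of Lemma~\ref{lem1}.
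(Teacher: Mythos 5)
Your argument is correct and follows exactly the route the paper intends: the paper states the corollary as an immediate consequence of Lemma~\ref{lem:liv} and Lemma~\ref{lem1} without further detail, and your chaining of those two lemmas is precisely that deduction. Your extra care with the modular-law step, checking that $(R\cap F_n'F_n^q)/N=(R/N)\cap(F_n'F_n^q/N)$ because the kernel $N=[R,F_n]R^q$ lies in both subgroups, is a legitimate and welcome filling-in of a detail the paper leaves implicit.
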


 Therefore, in order to obtain a presentation for the groups $G \wedge^q G$ and $H_2(G, \Z_q)$, for a
polycyclic group $G$ given by a consistent polycyclic presentation, we apply standard methods to determine
presentations of subgroups of polycyclic groups (see for instance \cite[Chap. 8]{Holt}).

 By the isomorphism given in  Corollary~\ref{cor1}, we obtain generators for $G \wedge^q G$ via $E_q(G)$.
 \begin{prop} The subgroup $(E_q(G))'(E_q(G))^q$ of $E_q(G)$ is generated by the set
\[
\left\langle [g_i, g_j], g_k^q \vert 1\leq i< j \leq, 1\leq k \leq n \right\rangle .
\]
 	\begin{proof} As
 	$E_q(G)=\left\langle g_1,...,g_n,t_1,...,t_l \vert r_i=t_i, \left(t_i\text{ }q\text{-central}
 	\right)\right\rangle$
 	is polycyclic, we immediately get that
 	$$(E_q(G))'= \left\langle [g_i, g_j] \vert 1\leq i<j \leq n \right\rangle.$$
Now by a simple induction on $q$ we see that each power $g^q, g \in E_q(G),$ is a word in the commutators
$[g_i, g_j]$ and in the $q$-th powers $g_i^q$ of the generators of $G$.
 	\end{proof}
 \end{prop}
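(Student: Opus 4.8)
The plan is to prove the two inclusions separately. Writing $N = \langle [g_i,g_j],\, g_k^q \mid 1\le i<j\le n,\ 1\le k\le n\rangle$ for the stated generating set and $M = \langle [g_i,g_j]\mid 1\le i<j\le n\rangle$, the inclusion $N \subseteq (E_q(G))'(E_q(G))^q$ is immediate, since each $[g_i,g_j]$ lies in the derived subgroup and each $g_k^q$ in the subgroup of $q$-th powers. The substance of the proof is the reverse inclusion, which I would split into the statements $(E_q(G))' = M$ and $(E_q(G))^q \subseteq N$; the first of these is the heart of the matter.

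To prove $(E_q(G))' = M$ I would induct on the length of the polycyclic generating sequence $(g_1,\dots,g_n,t_1,\dots,t_l)$, peeling off $g_1$. What makes this work -- and what fails for an arbitrary finitely generated group, already a free group of rank two -- is precisely the polycyclic structure: the subgroup $H := \langle g_2,\dots,g_n,t_1,\dots,t_l\rangle$ is the first term of the subnormal series furnished by the presentation, hence normal in $E_q(G)$, and $E_q(G)/H$ is cyclic, generated by the image of $g_1$. Since this quotient is abelian we have $(E_q(G))' \le H$, so for $h\in H$ the commutator $[g_1,h]$ lies in $H$ and $[[g_1,h],h']\in H'$. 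The identity $[g_1,hh'] = [g_1,h'][g_1,h]^{h'}$ then shows that $h\mapsto [g_1,h]H'$ is a homomorphism from $H$ into the abelian group $H/H'$, so its image is generated by the images of the generators of $H$; the brackets $[g_1,t_i]$ vanish because $T_q$ is central, leaving the $[g_1,g_j]H'$ with $j\ge 2$. Combining $(E_q(G))' = \langle [g_1,h]\mid h\in H\rangle H'$ with this observation and the induction hypothesis $H' = \langle [g_i,g_j]\mid 2\le i<j\le n\rangle$ yields $(E_q(G))' = \langle [g_1,g_j]\mid j\ge 2\rangle H' = M$, the base case being a cyclic (indeed abelian) group with trivial derived subgroup, reached once the central generators $t_i$ are exhausted.

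Once $(E_q(G))' = M\subseteq N$ is established, the $q$-th powers require no further work. Because $N$ contains the derived subgroup it is normal, and $E_q(G)/N$ is abelian; moreover it is generated by the images of the $g_k$ and $t_i$, each of order dividing $q$ modulo $N$ since $g_k^q\in N$ and $t_i^q\in N$. An abelian group generated by elements of order dividing $q$ has exponent dividing $q$, so $w^q\in N$ for every $w\in E_q(G)$, i.e. $(E_q(G))^q\subseteq N$. Together with $(E_q(G))'\subseteq N$ this gives $(E_q(G))'(E_q(G))^q\subseteq N$, and with the opening inclusion the equality follows. Alternatively, one may instead argue directly, by induction on $q$ together with collection in the polycyclic presentation, that each $g^q$ is a word in the $[g_i,g_j]$ and the $g_k^q$.

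The step I expect to be the main obstacle is the first: showing that $(E_q(G))'$ is generated \emph{as a subgroup}, and not merely as a normal subgroup, by the weight-two commutators $[g_i,g_j]$. This is genuinely false without further hypotheses, so the argument must exploit the polycyclic presentation, which it does through the normality of $H$ and the collapse of the crossed homomorphism $h\mapsto[g_1,h]$ to an honest homomorphism modulo $H'$. The only routine bookkeeping is to check that the presentation inherited by $g_2,\dots,g_n,t_1,\dots,t_l$ is again of the required polycyclic form, so that the induction hypothesis applies.
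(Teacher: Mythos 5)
Your proposal is correct, and it supplies genuine arguments where the paper's proof only gestures. The overall decomposition is the same as the paper's: first show $(E_q(G))'=\langle [g_i,g_j]\mid i<j\rangle$, then deal with the $q$-th powers. For the derived subgroup, the paper simply says this is ``immediate'' from polycyclicity; your induction along the polycyclic series, with the observation that $h\mapsto [g_1,h]H'$ becomes an honest homomorphism into $H/H'$ and that $[g_1,t_i]$ vanishes by centrality of $T_q$, is the standard way to make that assertion true (and, as you rightly note, it is false for general finitely generated groups, so something like this must be said). The only point you state rather than derive is the identity $(E_q(G))'=\langle [g_1,h]\mid h\in H\rangle H'$, but this follows by routine commutator expansion from $H\trianglelefteq E_q(G)$ with cyclic quotient, so it is not a gap. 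For the powers, your route genuinely differs from the paper's: instead of the paper's (rather opaque) ``induction on $q$'' expressing each $g^q$ as a word in the $[g_i,g_j]$ and $g_k^q$ via collection, you pass to the quotient by $N$, observe it is abelian and generated by elements of order dividing $q$ (using $t_i^{d_i}=1$ with $d_i\mid q$), and conclude it has exponent dividing $q$. This is cleaner and avoids any collection argument; the paper's version, by contrast, directly exhibits each power as a word in the claimed generators, which is marginally more constructive but harder to justify rigorously. Both establish the proposition.
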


Let $\pi: E_q(G) \to G$ be the natural epimorphism and choose a preimage $\widetilde{g} \in E_q(G)$ for each
$g \in G$. By Proposition~\ref{qiso} we have an isomorphism $\beta: G \wedge^q G \to  (E_q(G))'(E_q(G))^q$ such that
$(g \wedge h)^\beta =  [\widetilde{g}, \widetilde{h}]$ and $(\widehat{k})^\beta = (\widetilde{k})^q.$

\begin{rem} \label{obs9} \hspace{7.9cm}
	\begin{itemize}
		\item[$(i)$] \label{obs9} As we have seen in the Introduction, $G$ acts naturally on $G \wedge^q G$
via $(g \wedge h)^k = g^k \wedge h^k$, $(\widehat{k})^g = \widehat{k^g}$ for all $g, h, k \in G$. In addition, this
action
is compatible with the isomorphism $\beta$  and $(g \wedge h)^k$ corresponds to
$[\widetilde{g}, \widetilde{h}]^k = [\widetilde{g^k}, \widetilde{h^k}]$, while $(\widehat{k})^g$ corresponds
to $(\widetilde{k}^q)^g = \widetilde{(k^g)}^q$. The image $w^k$ of an arbitrary element $w \in G \wedge^q G$ is
obtained by writing $w$ as a product of $q$-th powers and commutators and then computing the action of $k$ upon each
factor.
		\item[$(ii)$] By construction, the map $\lambda: G \times G \times G \to G \wedge^q G, \;
		(g, h, k) \mapsto [\widetilde{g}, \widetilde{h}] (\widetilde{k})^q$ is a $q$-biderivation. Applying
$\beta$ it corresponds to the
$q$-biderivation $\lambda: G \times G \times G \to (E_q(G))'(E_q(G))^q,  (g, h, k) \mapsto (g \wedge h) \widehat{k}$.
	\end{itemize}
\end{rem}

Note that we can determine the image of the action of $G$ and of the $q$-biderivation $\lambda$ (see \ref{qbide}) in
the polycyclic presentation of $G \wedge^q G$ by using the above Remark.

\begin{ex}[Continuation of Example~\ref{ex3.1}] We determine $S_3 \wedge^2 S_3$
by identifying it with the subgroup
$\left(E_2(S_3)\right)' \left(E_2(S_3)\right)^2 = \left\langle [g_1, g_2], g_1^2, g_2^2 \right\rangle =
\left\langle w \vert w^6 \right\rangle \cong C_6,$ where $w = g_2^2t_1$.
	\begin{itemize}
		\item[(i)] By Remark~\ref{obs9} the image of $(g_1 \wedge g_2)^{g_1}$ in the consistent polycyclic
presentation of $S_3 \wedge^2 S_3$ corresponds to the element
		$[g_1, g_2]^{g_1}$ of $\left(E_2(S_3)\right)'\left(E_2(S_3)\right)^2$.
		In turn, evaluating this element using the relations of $\left(E_2(S_3)\right)'\left(E_2(S_3)\right)^2$
		 we obtain the element
		\[
		g_1^{-1}[g_1, g_2]g_1 = g_1^{-1}g_2^2g_1 = g_2^2 = w^4.
		\]
		\item[(ii)] Analogously, the image of $(g_1, g_2, g_1)\lambda$ in the consistent polycyclic
presentation of $S_3 \wedge^2 S_3$ corresponds to the element
$[g_1, g_2] g_1^2$ of $\left(E_2(S_3) \right)' \left(E_2(S_3) \right)^2$ and thus,
using the relations of $\left(E_2(S_3)\right)'\left(E_2(S_3)\right)^2$, we evaluate this element to get
 \[
 [g_1, g_2] g_1^2 = g_2^2 t_1 = w.
 \]
	\end{itemize}
\end{ex}

\begin{ex}[Continuation of Example~\ref{ex3.2}] Now we determine $D_\infty \wedge^2 D_\infty$ by identifying
it with the subgroup
\begin{align*}
\left(E_q(D_\infty)\right)'\left(E_q(D_\infty)\right)^2 & =  \left\langle [g_1,g_2],g_1^2, g_2^2\right\rangle \\
  \  & = \left\langle w_1, w_2, w_3 \vert w_1^2, [w_1,w_2], [w_1,w_3], w_2^2, [w_2,w_3]\right\rangle \\
  \ & \cong C_2\times C_2 \times C_\infty
\end{align*}
where $w_1 = g_2^2, w_2 = t_1, w_3 = t_2$.
\begin{itemize}
\item[(i)] Analogous to the previous example, by Remark~\ref{obs9} the image of $(g_1 \wedge g_2)^{g_1}$ in the
consistent polycyclic presentation of $D_\infty \wedge^2 D_\infty$ corresponds to the element
$[g_1, g_2]^{g_1}$ in $\left(E_q(D_\infty)\right)' \left(E_q(D_\infty)\right)^2,$ that is, to
\[
g_1^{-1}[g_1, g_2]g_1 = g_1^{-1}g_2^2 t_2 g_1 = g_2^{-1} t_2 = w_1^{-1} w_3.
\]
\item[(ii)] Similarly, the image of $(g_1, g_2, g_1)\lambda$
corresponds to the element $[g_1, g_2] g_1^2$ of $\left(E_q(D_\infty)\right)' \left(E_q(D_\infty)\right)^2$, which
results
in
\[
[g_1, g_2]g_1^2 = g_2^2t_2t_1 = w_1 w_2 w_3.
\]
	\end{itemize}
\end{ex}
\section{The $q$-exterior center of a polycyclic group} \label{sec4}

Our next step is to show that we can easily determine  the
$q$-exterior center of a polycyclic group $G$  given by a consistent polycyclic presentation, using a consistent
polycyclic presentation
for $E_q(G)$ and standard methods for polycyclic groups (see \cite[Chap. 8]{Holt}). These techniques also extend those
found in \cite{EN} for the
exterior center (case $q = 0$).

\begin{thm} \label{qapresentext}
Let $G$ be a polycyclic group and $\pi:E_q(G) \to G$ the natural epimorphism. Then,
$Z_q^\wedge(G)=\left(Z(E_q(G))\right)\pi$.
\end{thm}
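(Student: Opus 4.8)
The plan is to translate the defining condition for membership in $Z_q^\wedge(G)$ into a commutator condition inside $E_q(G)$ via the isomorphism $\beta: G \wedge^q G \to (E_q(G))'(E_q(G))^q$ supplied by Proposition~\ref{qiso}. Recall from Lemma~\ref{lem1} that $T_q = \Ker \pi$ is a $q$-central subgroup of $E_q(G)$; in particular $T_q \leq Z(E_q(G))$, so the restriction of $\pi$ to $Z(E_q(G))$ has kernel $T_q$ and $(Z(E_q(G)))\pi \cong Z(E_q(G))/T_q$. Consequently a coset $gT_q$ meets $Z(E_q(G))$ (i.e.\ \emph{some} lift $\widetilde{g}$ of $g$ is central) if and only if \emph{every} lift of $g$ is central.

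First I would record the key consequence of the $q$-centrality of $T_q$: for any $\widetilde{g}, y \in E_q(G)$ and any $t \in T_q$ one has $[\widetilde{g}, yt] = [\widetilde{g}, t]\,[\widetilde{g}, y]^t = [\widetilde{g}, y]$, and similarly on the left. Hence $[\widetilde{g}, y]$ depends only on the images $\pi(\widetilde{g})$ and $\pi(y)$ in $G$. This has two useful effects: the commutator $[\widetilde{g}, \widetilde{x}]$ is independent of the chosen lifts, and a given lift $\widetilde{g}$ of $g$ lies in $Z(E_q(G))$ if and only if $[\widetilde{g}, \widetilde{x}] = 1$ for every $x \in G$ (testing against lifts of group elements suffices, since $E_q(G)$ is generated by such lifts together with $T_q$, and $T_q$ is central).

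Next I would invoke the isomorphism $\beta$ of Proposition~\ref{qiso}, under which $(g \wedge x)^\beta = [\widetilde{g}, \widetilde{x}]$ (exactly the identification used in Remark~\ref{obs9}, via $G \wedge^q G \cong (E_q(G))'(E_q(G))^q$ from Corollary~\ref{cor1}). Because $\beta$ is an isomorphism, $g \wedge x = 1$ in $G \wedge^q G$ if and only if $[\widetilde{g}, \widetilde{x}] = 1$ in $E_q(G)$. Chaining the equivalences then gives, for $g \in G$, that $g \in Z_q^\wedge(G)$ iff $g \wedge x = 1$ for all $x \in G$, iff $[\widetilde{g}, \widetilde{x}] = 1$ for all $x \in G$, iff $\widetilde{g} \in Z(E_q(G))$, iff $g \in (Z(E_q(G)))\pi$, which is the claimed equality.

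The only point requiring care — and the part I expect to be the main obstacle — is the well-definedness bookkeeping: one must check that ``$\widetilde{g}$ is central'' is a property of $g$ alone, independent of the lift, and that centrality can be tested against lifts of elements of $G$ rather than against all of $E_q(G)$. Both reductions rest entirely on the $q$-centrality of $T_q$ established in Lemma~\ref{lem1}; once these are in place the argument is a short chain of equivalences, with $\beta$ furnishing the crucial bridge between the exterior-square vanishing condition and the commutator condition in $E_q(G)$.
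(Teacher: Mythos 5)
Your proposal is correct and takes essentially the same route as the paper: both arguments reduce the vanishing condition $g \wedge x = 1$ to the commutator condition $[\widetilde{g},\widetilde{x}]=1$ via the isomorphism $\beta$ of Proposition~\ref{qiso}, and both use the centrality of $T_q = \Ker\pi$ to show that testing $[\widetilde{g},\cdot]$ against lifts of elements of $G$ is equivalent to testing against all of $E_q(G)$, hence to $\widetilde{g} \in Z(E_q(G))$. Your explicit well-definedness bookkeeping (independence of the choice of lift) is a point the paper passes over more quickly, but it rests on the same fact and does not change the argument.
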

\begin{proof} For each $g\in G$ let $\widetilde{g}$ be a pre-image of $g$ in $E_q(G)$ under the epimorphism
$\pi$, i.e., $(\widetilde{g})\pi = g$. Now, $[\widetilde{g}, \widetilde{a}] = 1$ for all $a \in G$ if, and only if,
$[\widetilde{g}, x]=1$ for all $x \in E_q(G)$. Indeed, given $x \in E_q(G)$ then $x^\pi \in G$ and by assumption
$[\widetilde{g},\widetilde{x^\pi}] = 1$. On the other hand,
$\left(\widetilde{x^\pi}\right)^\pi=x^\pi$.
	Thus, $\left(\widetilde{x^\pi}\right)^{-1}x \in Ker(\pi) \leq Z(E_q(G))$.
	Therefore, we have \\
		$1=[\widetilde{x^\pi}^{-1}x,\widetilde{g}]$
		 $=[\widetilde{x^\pi}^{-1},\widetilde{g}]^x[x,\widetilde{g}]$
		 $=[\widetilde{x^\pi},\widetilde{g}]^{-(\widetilde{x^\pi})^{-1}x}[x,\widetilde{g}]$
		 $=[\widetilde{x^\pi},\widetilde{g}][x,\widetilde{g}]$
		 $=[x,\widetilde{g}]$.

		Conversely, given $a \in G$ we have $\widetilde{a} \in E_q(G)$.
		By assumption $[x,\widetilde{g}]=1$ for all
		$x \in  E_q(G)$ and, in particular, for $x=\widetilde{a}$. Thus,
		$[\widetilde{a},\widetilde{g}]=1$, for all $a\in G$.

		Now, remind that the map $\beta: G\wedge^q G \to (E_q(G))'(E_q(G))^q$ given by
$(g \wedge h)\beta=[\widetilde{g},\widetilde{h}]$ and $(\widehat{k})\beta=\widetilde{k}^q$ is an isomorphism, and thus
we get:
		\begin{itemize}\item[] $Z_q^\wedge(G)=\left\{g\in G\vert
		1=g\wedge a \in G\wedge^qG \hspace{0.2cm}, \; \forall \; a\in G \right\} $
			\item[] \hspace{1.4cm}$=\left\{g\in G\vert [\widetilde{g}, \widetilde{a}]=1
			\hspace{0.2cm}, \; \forall \; a \in G\right\}$ (by using $\beta$)
			\item[] \hspace{1.4cm}$=\left\{g\in G\vert [\widetilde{g},x]=1,  \;
			\forall \; x \in E_q(G)\right \}$
			\item[] \hspace{1.4cm}$=\left\{g\in G\vert \widetilde{g} \in Z(E_q(G))\right\}$
			\item[] \hspace{1.4cm}$=\left(Z(E_q(G))\right)\pi$.
		\end{itemize} \end{proof}

	Thus, by Theorem~\ref{qapresentext}, the $q$-exterior center of a group $G$ given by a consistent polycyclic
presentation can be easily determined: first we determine a polycyclic presentation for $E_q(G)$ and its corresponding
natural epimorphism $\pi : E_q(G) \to G.$ Then we compute the center $Z(E_q(G))$ using standard methods for
polycyclically presented groups (see \cite[Chap. 8]{holt}) and, finally, we apply $\pi$ to obtain
$Z_q^\wedge(G) = \left(Z(E_q(G))\right)\pi$.

	\begin{ex}[Continuation of Example~\ref{ex3.1}] It follows from the consistent polycyclic presentation of
	$E_2(S_3)$ that  $Z \left(E_2(S_3)\right) = \left\langle t_1, t_3\right\rangle$.
	Thus,  $Z_2^\wedge\left(S_3\right) = 1$ and so, as one should expect, $S_3$ is $2$-capable.
	In fact, the group $Q$ given by
	$$Q = \left\langle a, b \mid a^4 = 1, a^{-1} b a = b^{-1}, b^3 = 1 \right\rangle$$
	has center $Z(Q) = <a^2> = Z_2(Q),$ of order 2, and $S_3 \cong Q/Z(Q)$.
	\end{ex}

	\begin{ex}[Continuation of Example~\ref{ex3.2}] It follows from the polycyclic presentation of
	$E_q(D_\infty)$ that $Z_q\left(E_q(D_\infty)\right) = \left\langle t_1, t_2 \right\rangle$.
Thus,  $Z^\wedge_q\left(D_\infty\right) = 1;$ hence,  $D_\infty$ is $q$-capable for all $q \geq 0$.
Indeed, the group $$Q = \left\langle a, b \mid a^{2q} = 1, a^{-1} b a = b^{-1}\right\rangle$$
has center $Z(Q)=<a^2> = Z_q(Q),$ of order $q$ and $D_\infty \cong Q/Z(Q)$.
	\end{ex}


\section{A consistent polycyclic presentation for
$\nu^q(G)/\Delta^q(G)$} \label{sec:5}

As seen in Sec.~\ref{sec:4}, we can determine a consistent polycyclic presentation $F_r/U$ for the $q$-exterior square
$G \wedge^q G$ in the generator $w_1,...,w_r$ and relators, say $u_1,...,u_s$. From such a presentation we will
determine a consistent polycyclic presentation for the group $\nu^q(G)/\Delta^q(G)$.
Remember that $\nu^q(G)/\Delta^q(G) \cong (G \wedge^q G) \rtimes (G \times G)$.

According to Remark~\ref{obs9}, we can determine the image of the $q$-biderivation
 $\lambda: G \times G \times G \to G \wedge^q G$: $(g,h,1) \mapsto (g \wedge h)$ and $(1,1,k) \mapsto \widehat{k}$
 in the consistent polycyclic presentation we obtained for $G \wedge^q G$. Analogously, we can construct the
 natural action of $G$ on the presentation found for $G \wedge^q G$, which is given by
$(g \wedge h)^x = g^x \wedge h^x$, $(\widehat{k})^x = \widehat{(k^x)}$.

Recall that we are given a consistent polycyclic presentation of group $G$; as before,  \\ $G = \langle g_1, \ldots,
g_n \mid r_1, \ldots, r_l \rangle.$
\begin{define} \label{deftauq}
Define $\tau^q(G)$ \index{$\tau^q(G)$} to be the group generated by
$g_1,...,g_n, g_1^\varphi,..., g_n^\varphi, w_1,...,w_r$, subject to the following defining relations:
\begin{itemize}
\item[(1)] $r_i(g_1,...,g_n) = 1$, for  $1\leq i \leq l$;
\item[(2)] $r_i(g_1^\varphi,...,g_n^\varphi) = 1$, for $1\leq i \leq l$;
\item[(3)] $u_i(w_1,...,w_r)=1, $ for $1\leq i \leq s$;
\item[(4)] $g_i^{-1} g_j^\varphi g_i = g_j^\varphi \left((g_i, g_j, 1)\lambda \right)^{-1}$, for $1\leq i,j \leq n$,\\
	$g_i g_j^\varphi g_i^{-1} = g_j^\varphi \left((g_i^{-1},g_j,1) \lambda\right)^{-1},$ for $1\leq i,j \leq n$
$i\notin I;$
\item[(5)] $g_j^{-1} w_i g_j = w_i^{g_j}$, for $1\leq i \leq r$, $1\leq j\leq n,$\\
      $g_jw_ig_j^{-1}=w_i^{g_j^{-1}}$, for $1\leq i \leq r$, $1\leq j\leq n$, $j\notin I$,\\
      $g_j^{-\varphi}w_ig_j^\varphi=w_i^{g_j}$,for $1\leq i \leq r$, $1\leq j\leq n$,\\
      $g_j^\varphi w_ig_j^{-\varphi}=w_i^{g_j^{-1}}$, for $1\leq i \leq r$, $1\leq j\leq n$, $j\notin I$.
\end{itemize}
\end{define}

	Notice that we can compute the right hand side of the relations $(4)$ and $(5)$
as words in $w_1,...,w_r$ (see Remark~\ref{obs9}).

	\begin{thm} \label{teo14}
	Let $W  \leq \tau^q(G)$ be the subgroup $\langle w_1,...,w_r \rangle$.  Then we have:
		\begin{itemize}
			\item[(i)] $W$ is a normal subgroup of $\tau^q(G)$ and
            $\tau^q(G)/W \cong G \times G$;
			\item[(ii)] The presentation of $\tau^q(G)$ in definition~\ref{deftauq} is a consistent
polycyclic presentation;
		\item[(iii)] $W \cong G \wedge^q G$;
			\item[(iv)] The map
            $\psi: \nu^q(G) \to \tau^q(G)$ defined by
			$(g_i)\psi = g_i$, $(g_i^\varphi)\psi = g_i^\varphi$ and $(\widehat{k})\psi=(1,1,k)\lambda$,
			for all $1 \leq i \leq n$ and all $k\in G,$
			extends to a well defined homomorphism (also denoted by $\psi$) such that
			$\Ker \psi = \Delta^q(G)$.
		\end{itemize}
		\begin{proof} The proof is mainly  based on a careful analysis of the sets of defining relations
		(1) -- (5) of $\tau^q(G)$, as established in the Definition~\ref{deftauq}.
		The relations $(5)$ tell us that $W$ is in fact a normal subgroup of $\tau^q(G)$. The relations
$(1), (2)$ and $(4)$ imply that $\tau^q(G)/W \cong G \times G^\varphi$. In addition, relations $(3)$ show that
$W$ is a factor of $G \wedge^q G$. Thus, $\tau^q(G)$ satisfies the exact sequence
\[
G \wedge^q G \to \tau^q(G)\to G \times G^\varphi \to 1.
\]
Now, the relations  $(5)$ imply that $G \times G^\varphi$ acts by conjugation on $W,$
in the same way as $G \times G$ acts naturally on $G \wedge^q G$.
In particular, we get that $[w, g] = w^{-1}w^g$ and, analogously,
$[w, h^\varphi]= w^{-1} w^{h^\varphi}$, for all words $w$ in $w_1,...,w_r$, all words $g$ in $g_1,...,g_n$  and all
words
$h^\varphi$ in $g_1^\varphi,...,g_n^\varphi$. Furthermore, the definition of a $q$-biderivation and relations
$(4)$ imply that $[g, h^\varphi] = (g,h,1)\lambda$, for all words $g$ in $g_1,...,g_n$ and
$h^\varphi$ in $g_1^\varphi,...,g_n^\varphi$.\\
Part $(i)$ then follows directly from the above considerations. \\
$(ii).$ The relations $(1)$ -- $(5)$ already have the form of a polycyclic presentation.
Thus, it remains to check them for consistency. Well, all consistency relation in the generators
$g_1,...,g_n$ is satisfied, once relations $(1)$ come from a consistent polycyclic presentation of $G$. Analogously,
for the relations $(2)$ and $(3)$; they say that all consistency in the generators $g_1^\varphi,...,g_n^\varphi$ and
$w_1,...,w_r$ are also satisfied. Besides that, if a consistency relation
involves one generator of the $w_1,...,w_r$, then it is satisfied, once $G \times G^\varphi$ acts on $W$ likewise
$G \times G$ acts naturally on $G \wedge^q G$. Therefore, the bottom line is really to check the
consistency relations in
$g_1,...,g_n, g_1^\varphi,..., g_n^\varphi$, involving mixed generators $g_i$ e $g_j^\varphi$.
They are:
\begin{description}
	\item \hspace{4.5cm}$g_k^\varphi(g_jg_i)=(g_k^\varphi g_j)g_i$ para $j>i$;
	\item \hspace{4.5cm}$g_k^\varphi(g_j^\varphi g_i)=(g_k^\varphi g_j^\varphi)g_i$ para $k>j$;
	\item \hspace{4.5cm}$((g_j^\varphi)^{e_j})g_i=(g_j^\varphi)^{e_j-1}(g_j^\varphi g_i)$ para $j\in I$;
	\item \hspace{4.5cm}$g_j^\varphi(g_i^{e_i})=(g_j^\varphi g_i)g_i^{e_i-1}$ para $i\in I$;
	\item \hspace{4.5cm}$g_j^\varphi=(g_j^\varphi g_i^{-1})g_i$ para $i\notin I$.
\end{description}
Consider for example the first of these relations.
Supposing that $g_i^{-1} g_j g_i = r_{ij}(g_1,...,g_n) = r_{ij}$ in the defining relations of $G$ and using the fact
that $\lambda$ is a $q$-biderivation, we get:
\begin{description}
	\item \hspace{4.5cm}$g_k^\varphi(g_jg_i)=(g_jg_i)(g_k^\varphi)^{g_jg_i}$
	\item \hspace{6.1cm}$=(g_ir_{ij})\left(g_k^\varphi\left(\left(g_jg_i,g_k,1\right)\lambda\right)^{-1}\right)$,
\item \hspace{4.5cm}$(g_k^\varphi g_j)g_i=\left(g_jg_k^\varphi\left(\left(g_j,g_k,1\right)\lambda\right)^{-1}\right)g_i$
\item \hspace{6.1cm}$=g_jg_k^\varphi g_i\left(\left(\left(g_j,g_k,1\right)\lambda\right)^{-1}\right)^{g_i}$
\item \hspace{6.1cm}$=g_jg_i(g_k^\varphi)^{g_i}\left(\left(g_j,g_k,1\right)\lambda^{g_i}\right)^{-1}$
\item \hspace{6.1cm}$=g_ir_{ij}g_k^\varphi \left(\left(g_i,g_k,1\right)\lambda\right)^{-1}\left(\left(
g_j^{g_i},g_k^{g_i},1\right)\lambda\right)^{-1}$
\item \hspace{6.1cm}$=g_ir_{ij}g_k^\varphi \left(\left(g_j^{g_i},g_k^{g_i},1\right)\lambda\left(
g_i,g_k,1\right)\lambda\right)^{-1}$
\item \hspace{6.1cm}$=g_ir_{ij}g_k^\varphi \left(\left(g_jg_i,g_k,1\right)\lambda\right)^{-1}$.
\end{description}

The other consistency relations can be checked by similar calculations.
Thus we obtain that $\tau^q(G)$ is given by a consistent polycyclic presentation.\\
Part $(iii)$ follows from $(ii)$ and from the theory of polycyclic presentations
(see for instance \cite[Sec. 8.3]{Holt}), once $W$, as a subgroup of $\tau^q(G)$, has a consistent polycyclic
presentation in the generators $w_1,...,w_r$ and relations $u_1,...,u_s$. Therefore, $W \cong G \wedge^q G$.\\
$iv)$ Since $\lambda$ is a $q$-biderivation, all relations of $\nu^q(G)$ hold in $\tau^q(G)$.
Thus, $\psi : \nu^q(G) \to \tau^q(G)$ is an epimorphism, once $g_i, g_j^\varphi \in Im(\psi)$ for all
$1\leq i,j \leq n$, and if a word $w_i \in W \cong G\wedge^qG$ is a product of commutators and
$q$-th powers, then $w_i \in Im(\psi)$, for all $1 \leq i \leq r$.
Consequently, $Im(\psi)=\tau^q(G)$. Besides that,
$([g, h^\varphi])\psi=[g, h^\varphi]=(g,h,1)\lambda$ and
$(\widehat{k})\psi=(1,1,k)\lambda$, for all words, $g$ in generators the $g_1,...,g_n$,
$h^\varphi$ in $g_1^\varphi,...,g_n^\varphi$, and all $k \in G$.
Therefore, the map induced by $\psi$ on the subgroup $\Upsilon^q(G)$ coincides with
the map $\delta: \Upsilon^q(G) \to G \wedge^q G$, by construction. We then get the following commutative diagram:

\xymatrix{&&& 1\ar[r] & \Upsilon^q(G)\ar[r]\ar[d]^\delta &
\nu^q(G)\ar[r]\ar[d]^\psi & G\times G^\varphi \ar[r]\ar[d]^\cong & 1 \\
 &&& 1\ar[r] &  W\ar[r] & \tau^q(G)\ar[r] & G\times G^\varphi \ar[r] & 1.}

It follows that $Ker(\psi)\leq \Upsilon^q(G)$ and, consequently, $Ker(\psi)=Ker(\delta)=\Delta^q(G)$. This completes
the proof.
		\end{proof}
	\end{thm}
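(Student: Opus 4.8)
The plan is to establish the four parts in the listed order, since each builds on the previous, exploiting throughout that relations (1)--(5) are already in polycyclic normal form and that the right-hand sides of (4) and (5) lie in $W=\langle w_1,\dots,w_r\rangle$.

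For part (i), normality of $W$ is immediate from relations (5): each generator $g_j^{\pm1}$ and $(g_j^\varphi)^{\pm1}$ conjugates every $w_i$ to a prescribed word in the $w_i$, so $W$ is closed under conjugation by all generators and $W\trianglelefteq\tau^q(G)$. To identify the quotient I would pass to $\tau^q(G)/W$, where relations (4) collapse to $g_i^{-1}g_j^\varphi g_i=g_j^\varphi$ (the $\lambda$-factors vanishing mod $W$) so the two families of generators commute, while (1) and (2) supply the two copies of the presentation of $G$; hence $\tau^q(G)/W\cong G\times G^\varphi\cong G\times G$. Recording via (3) that $W$ is a quotient of $G\wedge^q G$ yields the exact sequence $G\wedge^q G\to\tau^q(G)\to G\times G^\varphi\to1$. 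From the relations I would also read off $[w,g]=w^{-1}w^g$, $[w,h^\varphi]=w^{-1}w^{h^\varphi}$ and, with the biderivation axioms and (4), the identity $[g,h^\varphi]=(g,h,1)\lambda$.

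The main obstacle is part (ii), the consistency check, since the relations are already polycyclic and only confluence of the collection overlaps is at stake. I would first dispatch the easy overlaps: those among the $g_i$ alone are consistent because (1) is a consistent presentation of $G$, and likewise (2) and (3) settle the overlaps purely among the $g_j^\varphi$ and purely among the $w_i$; any overlap producing a $w_i$ is consistent because $G\times G^\varphi$ acts on $W$ exactly as $G\times G$ acts on $G\wedge^q G$. The crux is the five families of mixed overlaps between $g_i$ and $g_j^\varphi$ displayed in the statement. For each I would collect both sides to normal form and check agreement; the engine is that $\lambda$ is a $q$-biderivation, so the axioms \eqref{relqbide}--\eqref{relqbide6} are exactly what reconciles the two orders of collection. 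As the representative case, take the overlap $g_k^\varphi(g_jg_i)=(g_k^\varphi g_j)g_i$ with $j>i$: substituting $g_i^{-1}g_jg_i=r_{ij}$ and applying the expansion rules \eqref{relqbide} and \eqref{relqbide2}, both sides reduce to $g_ir_{ij}g_k^\varphi\bigl((g_jg_i,g_k,1)\lambda\bigr)^{-1}$. The remaining four families are handled by entirely analogous manipulations, so I would write one out in full and note that the rest follow the same pattern.

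Parts (iii) and (iv) are then largely formal. For (iii), once (ii) gives a consistent polycyclic presentation of $\tau^q(G)$, the subgroup generated by the tail generators $w_1,\dots,w_r$ inherits the consistent polycyclic presentation with relators $u_1,\dots,u_s$ by the standard subgroup theory of polycyclic presentations; this is precisely the presentation $F_r/U$ of $G\wedge^q G$, so $W\cong G\wedge^q G$. For (iv), I would first verify that $\psi$ respects every defining relation of $\nu^q(G)$: because $\lambda$ is a $q$-biderivation and relations (4), (5) encode the induced actions, the images satisfy \eqref{RR1}--\eqref{RR6} together with the conjugation relations, so $\psi$ is a well-defined homomorphism; it is onto since the $g_i,g_j^\varphi$ lie in the image and each $w_i$, being a product of commutators $[g,h^\varphi]$ and $q$-th powers, does too. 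For the kernel I would note that $\psi$ restricted to $\Upsilon^q(G)$ coincides with the canonical map $\delta:\Upsilon^q(G)\to W\cong G\wedge^q G$, whose kernel is $\Delta^q(G)$; placing $\psi$ and $\delta$ in the commutative ladder of the two exact sequences over $G\times G^\varphi$ with the right-hand map an isomorphism, a short diagram chase gives $\Ker\psi\le\Upsilon^q(G)$ and hence $\Ker\psi=\Ker\delta=\Delta^q(G)$.
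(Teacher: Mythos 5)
Your proposal is correct and follows essentially the same route as the paper's proof: normality and the quotient $G\times G^\varphi$ from relations (5), (1), (2), (4); the consistency check reduced to the five mixed $g_i$--$g_j^\varphi$ overlaps resolved via the $q$-biderivation axioms (with the same representative computation reducing both sides to $g_ir_{ij}g_k^\varphi\bigl((g_jg_i,g_k,1)\lambda\bigr)^{-1}$); part (iii) by subgroup theory of consistent polycyclic presentations; and part (iv) by the commutative ladder identifying $\psi|_{\Upsilon^q(G)}$ with $\delta$. No substantive differences to report.
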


\begin{ex}[Continuation of Example~\ref{ex3.1}] According to the above result, the following polycyclic presentation is
a presentation of
  $\tau^2(S_3)=\nu^2(S_3)/\Delta^2(S_3)$ as the group generated by
$g_1, g_2, g_1^\varphi, g_2^\varphi, w $ subject to the following relations:
\begin{itemize}
	\item[(1)] $g_1^2=1$, $g_1^{-1}g_2g_1=g_2^{-1}$, $g_2^3$,
	\item[(2)] $(g_1^\varphi)^2=1$, $(g_1^\varphi)^{-1}g_2^\varphi g_1^\varphi=(g_2^\varphi)^{-1}$,
	$(g_2^\varphi)^3$,
	\item[(3)] $w^6=1$,
	\item[(4)] $g_1^{-1}g_1^\varphi g_1=g_1^\varphi$, \\
	$g_1^{-1}g_2^\varphi g_1=g_2^\varphi w^2$,  \\
	$g_2^{-1}g_1^\varphi g_2=g_1^\varphi w^4$, \\
	$g_2^{-1}g_2^\varphi g_2=g_2^\varphi$,
	\item[(5)] $g_1^{-1}w g_1=w^5$, \\
			$g_2^{-1}w g_2=w$, \\
			$(g_1^\varphi)^{-1}w g_1^\varphi=w^5$, \\
			$(g_2^\varphi)^{-1}w g_2\varphi=w$.
		\end{itemize}
	\end{ex}

	\begin{ex}[Continuation of Example~\ref{ex3.2}]  Again, according to Theorem~\ref{teo14} we find that
	$\tau^2(D_\infty)=\nu^2(D_\infty)/\Delta^2(D_\infty)$ has the polycyclic presentation in the generators
	$g_1, g_2, g_1^\varphi, g_2^\varphi, w_1, w_2, w_3 $ subject to the relations:
		\begin{itemize}
			\item[(1)] $g_1^2=1$, $g_1^{-1}g_2g_1=g_2^{-1}$,
			\item[(2)] $(g_1^\varphi)^2=1$, $(g_1^\varphi)^{-1}g_2^\varphi g_1^\varphi=(g_2^\varphi)^{-1}$,
			\item[(3)] $w_1^{-1}w_2w_1=w_2$, $w_2^2$,\\
			$w_1^{-1}w_3w_1=w_3$, $w_3^2$,\\
			$w_2^{-1}w_3w_2=w_3$,
			\item[(4)] $g_1^{-1}g_1^\varphi g_1=g_1^\varphi$, \\
			$g_1^{-1}g_2^\varphi g_1=g_2^\varphi w_1^{-1}w_3$,  \\
			$g_2^{-1}g_1^\varphi g_2=g_1^\varphi w_1w_3$, \\
			$g_2^{-1}g_2^\varphi g_2=g_2^\varphi$,\\
			$g_2g_1^\varphi g_2^{-1}=g_1^\varphi w_1^{-1}w_3$, \\
			$g_2g_2^\varphi g_2^{-1}=g_2^\varphi$,
			\item[(5)] $g_1^{-1}w_1 g_1=w_1^{-1}$, \hspace{1cm}	$g_1^{-1}w_2 g_1=w_2$,
			\hspace{0.5cm}$g_1^{-1}w_1 g_1=w_3$, \\
	$g_2^{-1}w_1 g_2=w_1$, \hspace{1.27cm} $g_2^{-1}w_2 g_2=w_2$,\hspace{0.5cm} $g_2^{-1}w_3 g_2=w_3$, \\
	$g_2w_1 g_2^{-1}=w_1$,\hspace{1.37cm} $g_2w_2 g_2^{-1}=w_2$, \hspace{0.5cm}$g_2w_3 g_2^{-1}=w_3$, \\
$(g_1^\varphi)^{-1}w_1 g_1^\varphi=w_1^{-1}$, \hspace{0.5cm}	$(g_1^\varphi)^{-1}w_2 g_1^\varphi=w_2$,
\hspace{0.6cm} $(g_1^\varphi)^{-1}w_1 g_1^\varphi=w_3$, \\
	$(g_2^\varphi)^{-1}w_1 g_2^\varphi=w_1$, \hspace{0.82cm}$(g_2^\varphi)^{-1}w_2 (g_2^\varphi)=w_2$,
	\hspace{0.5cm} $(g_2^\varphi)^{-1}w_3 (g_2^\varphi)=w_3$, \\
	$g_2^\varphi w_1 (g_2^\varphi)^{-1}=w_1$,\hspace{0.82cm} $g_2^\varphi w_2 (g_2^\varphi)^{-1}=w_2$,
	\hspace{0.8cm}$g_2^\varphi w_3 (g_2^\varphi)^{-1}=w_3$.
		\end{itemize}
	\end{ex}

	\section{A polycyclic presentation for
	$\nu^q(G)$} \label{nuqfinito}

	\hspace{0.6cm} We can now use the consistent polycyclic presentation of $\tau^q(G)$ in place of $F_n/R$ and the
	finite presentation of $\nu^q(G)$ in place of $F_m/S$. Note that the epimorphism $\psi:\nu^q(G)\to\tau^q(G)$ has
the required form. Thus we get the following

	\begin{thm}\label{teo16} $\nu^q(G) = \mathfrak{E}^q(\tau^q(G))$.
		\begin{proof} We have  $\tau^q(G)=F_n/R$,  $\nu^q(G)=F_m/S$ and the epimorphism
$\psi:\nu^q(G)\to\tau^q(G)$ with the kernel $Ker(\psi)=K/S$. By definition,
$\mathfrak{E}^q(\tau^q(G))\cong F_m/K^q[K,F_m]S$. By Theorem~\ref{teo14} the
group $\nu^q(G)$ is a $q$-central extension of $\tau^q(G)$. Thus,  $[K,F_m]K^q \leq S$.
Since by Lemma~\ref{lem2}
		$\mathfrak{E}^q(\tau^q(G))=F_m/K^q[K,F_m]S$, it follows that
$\mathfrak{E}^q(\tau^q(G))=F_m/S=\nu^q(G),$ as desired.
		\end{proof}
	\end{thm}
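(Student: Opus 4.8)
The plan is to read the statement as a direct application of the $\mathfrak{E}^q$-machinery of Section~\ref{gast}, with $\tau^q(G)$ playing the role of the base group and $\nu^q(G)$ playing the role of the finitely presented group mapping onto it. First I would fix the data: write $\tau^q(G) = F_n/R$ using the consistent polycyclic presentation produced in Theorem~\ref{teo14}(ii), write $\nu^q(G) = F_m/S$ using its (finite) defining presentation, and take for the epimorphism the map $\psi : \nu^q(G) \to \tau^q(G)$ of Theorem~\ref{teo14}(iv), whose kernel is $K/S = \Delta^q(G)$. Before invoking Lemma~\ref{lem2} I would check that $\psi$ ``has the required form'', i.e.\ that it carries each free generator of $F_m$ to a word in the polycyclic generators $g_i, g_i^\varphi, w_k$ of $\tau^q(G)$; this is immediate from the formulas $(g_i)\psi = g_i$, $(g_i^\varphi)\psi = g_i^\varphi$ and $(\widehat{k})\psi = (1,1,k)\lambda$, the last being a word in the $w_k$ by Remark~\ref{obs9}. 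With these identifications, the definition of $\mathfrak{E}^q$ together with Lemma~\ref{lem2} gives $\mathfrak{E}^q(\tau^q(G)) \cong F_m/K^q[K,F_m]S$.

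The whole statement then reduces to the single inclusion $K^q[K,F_m] \leq S$, for once this is known we have $K^q[K,F_m]S = S$ and hence $\mathfrak{E}^q(\tau^q(G)) = F_m/S = \nu^q(G)$. To obtain this inclusion I would argue that $\nu^q(G)$ is a $q$-central extension of $\tau^q(G)$, that is, that $K/S = \Delta^q(G)$ is central in $\nu^q(G)$ and of exponent dividing $q$. Centrality is exactly Remark~\ref{rem:action}, which records that $\Delta^q(G) \leq Z(\nu^q(G))$. For the exponent condition I would use relation~\eqref{RR6}: since $[g,g]=1$ and $\widehat{1}=1$ (the latter following from~\eqref{RR4} with $k=k_1=1$), each generator $[g, g^\varphi]$ of $\Delta^q(G)$ satisfies $[g, g^\varphi]^q = \widehat{[g,g]} = \widehat{1} = 1$; as $\Delta^q(G)$ is central, hence abelian, this forces $\Delta^q(G)^q = 1$. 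Translating back to $F_m$, centrality gives $[K, F_m] \leq S$ and the exponent condition gives $K^q \leq S$, whence $K^q[K,F_m] \leq S$, as required.

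The delicate point, and the step I expect to need the most care, is not the final computation but the legitimacy of feeding $\nu^q(G)$ into the $\mathfrak{E}^q$ construction: one must make sure that the bookkeeping of generators is consistent (so that Lemma~\ref{lem2}, and behind it Proposition~\ref{qiso} together with the maximality of $E_q$ among polycyclic $q$-central extensions, genuinely apply to $\psi$) and that the kernel identified abstractly in Theorem~\ref{teo14}(iv) as $\Delta^q(G)$ really coincides with the normal subgroup $K/S$ entering the definition of $\mathfrak{E}^q(\tau^q(G))$. Once these identifications are secured, the verification that $\Delta^q(G)$ is $q$-central closes the argument.
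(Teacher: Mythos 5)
Your proposal is correct and follows essentially the same route as the paper: identify $\tau^q(G)=F_n/R$, $\nu^q(G)=F_m/S$ with $\Ker\psi=K/S=\Delta^q(G)$, apply Lemma~\ref{lem2} to get $\mathfrak{E}^q(\tau^q(G))\cong F_m/K^q[K,F_m]S$, and reduce everything to the inclusion $K^q[K,F_m]\leq S$, i.e.\ to $\nu^q(G)$ being a $q$-central extension of $\tau^q(G)$. The only difference is that you spell out why $\Delta^q(G)$ is $q$-central (centrality from Remark~\ref{rem:action}, exponent dividing $q$ from relations \eqref{RR6} and \eqref{RR4}), a point the paper cites to Theorem~\ref{teo14} without elaboration.
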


	Notice that we used a finite presentation for $\nu^q(G)$, to which we guarantee finiteness only in the case
	that $G$ is finite. On the other hand, the epimorphism $\psi:\nu^q(G) \to \tau^q(G)$ does not depend on the
finiteness of the presentation of $\nu^q(G)$ and so we can consider that epimorphism. If $G$ is an infinite polycyclic
group then by definition $\nu^q(G)$ is given by an infinite presentation, say $F/S$, where  $F$ is a free group on the
generators of $\nu^q(G)$, which we denote by $X$, of infinite rank, and where
$S$ is the normal closure of the relations
\eqref{RR1}---\eqref{RR6}. On the other hand,
$\nu^q(G)$
is polycyclic and so it has a finite polycyclic presentation
$\left\langle X_0 \vert S_0 \right\rangle$, where $X_0 \subseteq X$ and $S$ is the normal closure of
 $S_0$, $\bar{S_0}=S$. Thus, we can use the results in Lemma~\ref{lem2} and it suffices to prove that the
image of $\varsigma$ is generated by the elements
$g_1,...,g_n,g_1^\varphi,...,g_n^\varphi,
\widehat{g_i},...,\widehat{g_n},$
and  $(S)\varsigma$ is generated by the defining relations of $\nu^q(G)$ evaluated only on the polycyclic
generators of $G$.

	\begin{prop} Consider the subgroup $L$ of $E_q\left(\tau^q(G)\right)$ given by
	\[
	L = \left\langle g_1,...,g_n, g_1^\varphi,...,g_n^\varphi, (1,1,g_1)\lambda,...,(1,1,g_n)\lambda \right\rangle.
	\]
	Then, $Im(\varsigma)=L$. In addition,
$(S)\varsigma$ is generated by the defining relations of $\nu^q(G)$ in the polycyclic generators of $G$ and
$G^\varphi$ in $E_q\left(\tau^q(G)\right)$.


		\begin{proof} By definition of $L$, to show that $Im(\varsigma) = L$ it suffices to show that
$(1,1,k)\lambda \in L$ for all $k \in G$. Let's prove this by induction on the number of polycyclic generators of $G$.
		If $n=1$ then $G=<g_1>$ and so $k=g_1^\alpha$, for some  $\alpha \in \Z$. Thus,
		$(1,1,k) \lambda=(1,1,g_1^\alpha)\lambda$. For $\alpha \geq 2$, using relation~\ref{relqbide4} in
Definition~\ref{def:qbide} we have
\begin{description}
\item[] $(1,1,k)\lambda=(1,1,g_1^2)\lambda$
\item[]\hspace{1.6cm} $=\underbrace{(1,1,g_1)\lambda}_{\in L}
\underbrace{\displaystyle\prod_{i=1}^{q-1}\left(\left((g_1,g_1^{-i},1)\lambda\right)^{g_1^{q-1-i}}\right)}_{=[g_1,g_1^{-
i\varphi}] \in L}\underbrace{(1,1,g_1)\lambda}_{\in L}$ $\in L$.
\end{description}
By assuming it for $\alpha-1$ then, analogously,
\begin{description}
	\item[] $(1,1,k)\lambda=(1,1,g_1^\alpha)\lambda$
	\item[]\hspace{1.6cm} $=(1,1,g_1)\lambda \displaystyle\prod_{i=1}^{q-1}\left(\left((g_1,g_1^{-i(n-1)},1)
	\lambda\right)^{g_1^{q-1-i}}\right) (1,1,g_1^{n-1})\lambda$ $\in L$.
\end{description}
In addition, again by the very definition of $\lambda$, as above, we obtain $(1,1,1)\lambda = 1$ and
$(1,1,k^{-1})\lambda = \left( \prod_{i=1}^{q-1}\left((x,x^{i},1)\lambda \right)\right)^{-1}
\left((1,1,x)\lambda \right)^{-1},$ which are elements of $L$. This completes the case $n = 1$.

Suppose $n \geq 1$ and that our assertion is true for $n-1$. If $k = g_1^{\alpha_1}...g_n^{\alpha_n}$ then, the same
argument used above gives:
\begin{description}
\item[] $(1,1,k)\lambda=(1,1,g_1^{\alpha_1}...g_n^{\alpha_n})\lambda$
\item[]$=(1,1,g_1^{\alpha_1})\lambda \displaystyle\prod_{i=1}^{q-1}
\left(\left((g_1^{\alpha_1},(g_2^{\alpha_2}...g_n^{\alpha_n})^{-i},1)
\lambda\right)^{(g_1^{\alpha_1})^{q-1-i}}\right) (1,1,g_2^{\alpha_2}...g_n^{\alpha_n})\lambda$ $\in L$
\end{description}
Thus, $Im(\varsigma)=L$.
		\end{proof}
	\end{prop}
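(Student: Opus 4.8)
The plan is to establish the two assertions in turn, with the bulk of the work going into the identification $\Imm(\varsigma)=L$. I would begin by recording that $\varsigma$ is determined on the generators of $\nu^q(G)$, sending $g_i\mapsto g_i$, $g_i^\vfi\mapsto g_i^\vfi$ and $\widehat{k}\mapsto(1,1,k)\lambda$ in $E_q(\tau^q(G))$, so that $\Imm(\varsigma)=\langle g_i,\,g_i^\vfi,\,(1,1,k)\lambda\mid 1\leq i\leq n,\ k\in G\rangle$. Since $L$ contains the images of all polycyclic generators, the inclusion $L\subseteq\Imm(\varsigma)$ is immediate and the real content is the reverse inclusion, which reduces to the single claim that $(1,1,k)\lambda\in L$ for every $k\in G$. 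The one tool I would use repeatedly is the biderivation relation \eqref{relqbide4}, which rewrites $(1,1,kk_1)\lambda$ as $(1,1,k)\lambda$ times a product of ``mixed'' factors of the shape $(k,\cdot,1)\lambda$ times $(1,1,k_1)\lambda$. The decisive structural observation is that, by the construction of $\tau^q(G)$ and Remark~\ref{obs9}, every such mixed factor $(g,h,1)\lambda$ equals a commutator $[g,h^\vfi]$ and hence lies in the subgroup generated by the $g_i$ and $g_i^\vfi$, that is, in $L$.

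With this reduction in place I would run an induction on the number $n$ of polycyclic generators of $G$, the base case $n=1$ being handled by a further induction on the exponent in $k=g_1^{\alpha}$. In the base case, \eqref{relqbide4} applied with $k_1=g_1$ splits $(1,1,g_1^{\alpha})\lambda$ into $(1,1,g_1^{\alpha-1})\lambda$ (in $L$ by the inner hypothesis), the mixed commutator factors (in $L$ as just noted), and a final $(1,1,g_1)\lambda$ (in $L$ by definition); the identity and negative exponents are dealt with by specialising \eqref{relqbide4} to get $(1,1,1)\lambda=1$ and a formula for $(1,1,g_1^{-1})\lambda$ in terms of already-known elements of $L$. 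For the inductive step I would write $k=g_1^{\alpha_1}\cdots g_n^{\alpha_n}$, apply \eqref{relqbide4} once to peel off the leading syllable $g_1^{\alpha_1}$, and conclude that $(1,1,g_1^{\alpha_1})\lambda\in L$ by the case $n=1$, that the mixed factors lie in $L$ as before, and that $(1,1,g_2^{\alpha_2}\cdots g_n^{\alpha_n})\lambda\in L$ by the outer induction hypothesis. This yields $(1,1,k)\lambda\in L$ for all $k$, and hence $\Imm(\varsigma)=L$.

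For the second assertion I would invoke the fact, recorded in the discussion following Lemma~\ref{lem2}, that $(S)\varsigma$ is contained in the central subgroup $T_q$ of $E_q(\tau^q(G))$. Because $T_q$ is central, conjugation by any element of $\Imm(\varsigma)$ fixes $(S)\varsigma$ pointwise; hence the image under $\varsigma$ of the normal closure $S$ is already generated, as a subgroup, by the $\varsigma$-images of the defining relations \eqref{RR1}--\eqref{RR6} of $\nu^q(G)$. Since by the first part $\Imm(\varsigma)=L$ is generated by the images of the polycyclic generators $g_i$ and $g_i^\vfi$, these $\varsigma$-images are precisely the evaluations of the defining relations on $g_i$ and $g_i^\vfi$ in $E_q(\tau^q(G))$, which gives the claim. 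The step I expect to be the main obstacle lies entirely in the first assertion: carefully tracking the conjugating exponents $g^{q-1-i}$ in the mixed factors of \eqref{relqbide4} and verifying at each stage of both inductions that these factors are genuinely of commutator type $[g,h^\vfi]$, so that membership in $L$ is preserved throughout.
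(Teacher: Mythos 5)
Your proposal is correct and follows essentially the same route as the paper: reduce everything to showing $(1,1,k)\lambda\in L$, run an induction on the number $n$ of polycyclic generators with a sub-induction on the exponent for $n=1$, peel off syllables via relation~\eqref{relqbide4}, and observe that the mixed factors $(g,h,1)\lambda=[g,h^{\varphi}]$ already lie in $\langle g_i,g_i^{\varphi}\rangle\leq L$. Your closing paragraph on $(S)\varsigma$ (centrality of $T_q$ reducing the normal closure to a subgroup generated by relator images) is a reasonable filling-in of a point the paper's own proof leaves implicit, but it does not change the method.
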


	Therefore, having obtained  a consistent polycyclic presentation of $\tau^q(G)$, we can extend it by
adding new ($q$-central) generators $t_i$, one for each relator $r_i$ of $\tau^q(G)$, and changing
	each relator $r_i$ by $r_it_i^{-1}$. Then, we evaluate the consistency relations among the relators of
$\nu^q(G)$ in this new presentation and apply Lemma \ref{lem2}(ii).

	The following result can by used in order to reduce the number of new generators added and the number of
relators evaluated in this process.

	\begin{lem} It is redundant to add new generators corresponding to relations (1) and (2) in the definition of
	$\tau^q(G)$. If these generators are not introduced, then it is redundant evaluate the relators (1) and (2) in
the definition of $\nu^q(G)$.
	\begin{proof} The relators (1) and (2) in the definition of $\nu^q(G)$ coincide with the relators (1) and (2)
in the definition of $\tau^q(G)$. Therefore, if we add new generators corresponding to those relators in (1)
and (2) of Definition~\ref{deftauq} and then we evaluate the relators (1) and (2) in the definition of $\nu^q(G)$,
then as a result we obtain the corresponding generators.
This means that the corresponding generators are eliminated in the process of the constructing the factor
group as described in Lemma~\ref{lem2}(b). This proves the result.
	\end{proof}
	\end{lem}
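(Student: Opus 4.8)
The plan is to use the two-stage description of $\nu^q(G) \cong \mathfrak{E}^q(\tau^q(G))$ furnished by Theorem~\ref{teo16} and Lemma~\ref{lem2}: one first forms $E_q(\tau^q(G))$ by adjoining to the consistent polycyclic presentation of $\tau^q(G)$ one $q$-central generator $t_j$ for each relator $r_j$, replacing $r_j$ by $r_j t_j^{-1}$, and one then passes to the factor group $\mathrm{Im}(\varsigma)/(S)\varsigma$. I would show that for the relators coming from families (1) and (2) of Definition~\ref{deftauq} these two stages exactly undo one another. The starting observation is the verbatim coincidence already noted: the relators $r_i(g_1,\dots,g_n)$ and $r_i(g_1^\varphi,\dots,g_n^\varphi)$, $1\le i\le l$, that make up families (1) and (2) of $\tau^q(G)$ are the very same words as the relators (1) and (2) in the finite presentation $F_m/S$ of $\nu^q(G)$.

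Next I would trace the effect of adjoining the corresponding generators. In $E_q(\tau^q(G))$ each such relator becomes a defining relation $r_i(g_1,\dots,g_n)=t_i^{(1)}$, respectively $r_i(g_1^\varphi,\dots,g_n^\varphi)=t_i^{(2)}$, with the $t_i^{(1)},t_i^{(2)}$ central of order dividing $q$. Forming the factor group of Lemma~\ref{lem2}(ii) requires evaluating the defining relators of $\nu^q(G)$ through $\varsigma$ and quotienting by the central subgroup $(S)\varsigma$ they generate. Evaluating the relator $r_i(g_1,\dots,g_n)$ of $\nu^q(G)$ in $E_q(\tau^q(G))$ returns precisely $t_i^{(1)}$, since the relation there now reads $r_i=t_i^{(1)}$; likewise the relator $r_i(g_1^\varphi,\dots,g_n^\varphi)$ returns $t_i^{(2)}$. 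Hence every $t_i^{(1)},t_i^{(2)}$ lies in $(S)\varsigma$, and being central it is trivialised in $\mathrm{Im}(\varsigma)/(S)\varsigma\cong\nu^q(G)$.

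I would then conclude that adjoining these generators and subsequently evaluating the matching relators of $\nu^q(G)$ is a round trip: each $t_i^{(1)},t_i^{(2)}$ is introduced only to be killed again, so the net effect on the resulting group is nil. Therefore one may omit adjoining the $q$-central generators attached to families (1) and (2) of $\tau^q(G)$, and, once they are omitted, one may also skip evaluating the matching relators (1) and (2) of $\nu^q(G)$, without changing $\nu^q(G)$. The one point demanding care, which I regard as the only real content of the argument, is to confirm that the evaluation of the relator word in $E_q(\tau^q(G))$ delivers exactly the single central generator and not some further product entangling the other adjoined generators; this is immediate from the centrality of the $t_j$ together with the exact verbatim agreement of the relator words, which makes the substitution $r_i\mapsto t_i^{(1)}$ (respectively $t_i^{(2)}$) exact. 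The remainder is routine bookkeeping within the construction of $\mathfrak{E}^q(\tau^q(G))$.
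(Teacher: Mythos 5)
Your proposal is correct and follows essentially the same route as the paper's own proof: both rest on the verbatim coincidence of the relators in families (1) and (2) of $\tau^q(G)$ and of $\nu^q(G)$, so that adjoining the corresponding $q$-central generators and then evaluating those relators via $\varsigma$ returns exactly those generators, which are therefore eliminated when forming the quotient of Lemma~\ref{lem2}. Your write-up merely makes explicit the bookkeeping that the paper leaves implicit.
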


	\begin{ex}[Continuation of Example~\ref{ex3.1}] We compute a polycyclic presentation of $\nu^2(S_3)$ as a
central extension $\mathfrak{E}^2(\tau^2(S_3))$ of $\tau^2(S_3)$. There is a lot of calculations to get such a
presentation (by hand) and so we'll omit the details. We obtain a polycyclic presentation for $\nu^2(S_3)$ in the
generators
$g_1, g_2, g_1^\varphi, g_2^\varphi, w, t$ and defining relations given by:
\begin{itemize}
 \item[(1)] $g_1^2=1$, $g_1^{-1}g_2g_1=g_2^{-1}$, $g_2^3$,
 \item[(2)] $(g_1^\varphi)^2=1$, $(g_1^\varphi)^{-1}g_2^\varphi g_1^\varphi=(g_2^\varphi)^{-1}$, $(g_2^\varphi)^3$,
 \item[(3)] $w^6=t$, $t^2$, $t$-central,
 \item[(4)] $g_1^{-1}g_1^\varphi g_1=g_1^\varphi w^6$, \\
      $g_1^{-1}g_2^\varphi g_1=g_2^\varphi w^8$,  \\
      $g_2^{-1}g_1^\varphi g_2=g_1^\varphi w^4$, \\
      $g_2^{-1}g_2^\varphi g_2=g_2^\varphi$,
\item[(5)] $g_1^{-1}w g_1=w^5$, \\
      $g_2^{-1}w g_2=w$, \\
      $g_1^{-1}w g_1=w^5$, \\
      $g_2^{-1}w g_2=w$.
\end{itemize}
From this we get the $2$-tensor square $S_3 \otimes^2 S_3 \cong <w> \leq \nu^2(S_3)$, that is,
$S_3\otimes^2S_3 \cong \Z_{12}$.
In addition, we immediatly find that $\Delta^2(S_3) \cong \Z_2$.
	\end{ex}

\begin{ex} Here we compute a polycyclic presentation for $\nu^3(D_\infty)$ as the
group  generatoed by  $g_1, g_2$, $g_1^\varphi, g_2^\varphi$, $w_1, w_2$
subject to the relations:
\begin{itemize}
\item[(1)] $g_1^2=1$, $g_1^{-1}g_2g_1=g_2^{-1}$,
\item[(2)] $(g_1^\varphi)^2=1$, $(g_1^\varphi)^{-1}g_2^\varphi g_1^\varphi=(g_2^\varphi)^{-1}$,
\item[(3)] $w_1^2$,  $w_1^{-1}w_2w_1=w_2^{-1}$,
\item[(4)] $g_1^{-1}g_1^\varphi g_1=g_1^\varphi$, \\
	$g_1^{-1}g_2^\varphi g_1=g_2^\varphi w_2^{-2}$,  \\
	$g_2^{-1}g_1^\varphi g_2=g_1^\varphi w_2^2$, \\
	$g_2^{-1}g_2^\varphi g_2=g_2^\varphi$,\\
	$g_2g_1^\varphi g_2^{-1}=g_1^\varphi w_2^{-2}$, \\
	$g_2g_2^\varphi g_2^{-1}=g_2^\varphi$,
\item[(5)] $g_1^{-1}w_1 g_1=w_1$, \hspace{3cm}	$g_1^{-1}w_2 g_1=w_2^{-1}$, \\
	$g_2^{-1}w_1 g_2=w_1w_2^2$, \hspace{2.5cm} $g_2^{-1}w_2 g_2=w_2$,\\
	$g_2w_1 g_2^{-1}=w_1w_2^{-2}$,\hspace{2.37cm} $g_2w_2 g_2^{-1}=w_2$,\\
	$(g_1^\varphi)^{-1}w_1 g_1^\varphi=w_1$, \hspace{2.3cm}	$(g_1^\varphi)^{-1}w_2 g_1^\varphi=w_2^{-1}$,\\
	$(g_2^\varphi)^{-1}w_1 g_2^\varphi=w_1w_2^2$, \hspace{2cm}$(g_2^\varphi)^{-1}w_2 (g_2^\varphi)=w_2$,\\
	$g_2^\varphi w_1 (g_2^\varphi)^{-1}=w_1w_2^{-2}$,\hspace{1.82cm} $g_2^\varphi w_2 (g_2^\varphi)^{-1}=w_2$.
\end{itemize}
According to this presentation we find that $D_\infty \otimes^3 D_\infty \cong D_\infty$.
	\end{ex}

	Notice that the computation of a presentation of $\nu^q(G)$ becomes relatively simple if the group $G$ is
$q$-perfect, according to Theorem~\ref{thm:qperfect} bellow. We shall continue using the same notation as before. More
specifically, let $F_n/R$ be consistent polycyclic presentation for the polycyclic group $G$ in the generators
 $g_1,...,g_n$,  relators $r_1,...,r_l$, and index set $I$. Let $F_r/U$ be a consistent polycyclic presentation
for $G \wedge^q G$ in the generators $w_1,...,w_r$ and relators $u_1,...,u_s$, as found in Sec.~\ref{sec:4}.
We determine the image of the $q$-biderivation
$\lambda: G \times G \times G \to G \wedge^q G$: $(g,h,1) \mapsto (g\wedge h)$ e $(1,1,k) \mapsto \widehat{k}$
in the consistent polycyclic presentation obtained for $G \wedge^q G$ and construct the natural action of $G$
on that presentation found for $G\wedge^qG$ (as defined before:
$(g\wedge h)^x=g^x\wedge h^x$, $(\widehat{k})^x=\widehat{k^x}$, according to Remark~\ref{obs9}).

	\begin{thm} \label{thm:qperfect}
	Let $G$ be a polycyclic group given as above. If $G$ is
$q$-perfect, then the group $\nu^q(G)$ is the group generated by $g_1,...,g_n,g_1^\varphi,...,g_n^\varphi$,
$w_1...,w_r$, subject to the defining relations
\begin{itemize}
	\item[(1)] $r_i(g_1,...,g_n)=1$ for $1\leq i \leq l$,
	\item[(2)] $r_i(g_1^\varphi,...,g_n^\varphi)=1$ para $1\leq i \leq l$,
	\item[(3)] $u_i(w_1,...,w_r)=1$ para $1\leq i \leq s$,
	\item[(4)] $g_i^{-1}g_j^\varphi g_i=g_j^\varphi \left\{(g_i,g_j,1)\lambda\right\}^{-1}$ para $1\leq i,j \leq n$,\\
$g_ig_j^\varphi g_i^{-1}=g_j^\varphi \left\{(g_i^{-1},g_j,1)\lambda\right\}^{-1}$ para $1\leq i,j \leq n$ $i\notin I$
	\item[(5)] $g_j^{-1}w_ig_j=w_i^{g_j}$, para $1\leq i \leq r$, $1\leq j\leq n$\\
	$g_jw_ig_j^{-1}=w_i^{g_j^{-1}}$, para $1\leq i \leq r$, $1\leq j\leq n$, $j\notin I$\\
	$g_j^{-\varphi}w_ig_j^\varphi=w_i^{g_j}$, para $1\leq i \leq r$, $1\leq j\leq n$\\
	$g_j^\varphi w_ig_j^{-\varphi}=w_i^{g_j^{-1}}$, para $1\leq i \leq r$, $1\leq j\leq n$, $j\notin I$.
\end{itemize}
	\begin{proof} In effect, according to Definition~\ref{deftauq} the above presentation is the same as that of
$\tau^q(G)$. By Theorem~\ref{teo14},  $\tau^q(G) \cong \nu^q(G)/\Delta^q(G)$. If  $G$ is $q$-perfect, then we have
 $\Delta^q(G)=1$ and so $\tau^q(G) \cong \nu^q(G)$. Consequently, the given presentation is a presentation of
$\nu^q(G)$.
	\end{proof}
	\end{thm}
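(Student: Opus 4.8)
The plan is to observe that the presentation displayed in the statement is, verbatim, the presentation of $\tau^q(G)$ from Definition~\ref{deftauq}: relations $(1)$--$(5)$ of the theorem are literally relations $(1)$--$(5)$ defining $\tau^q(G)$, the only point worth checking being that relation $(3)$, $u_i(w_1,\ldots,w_r)=1$, is the same consistent set of relators $u_1,\ldots,u_s$ for $G\wedge^q G$ used there. So if $\Gamma$ denotes the group presented by $(1)$--$(5)$, then $\Gamma=\tau^q(G)$, and by Theorem~\ref{teo14}(ii) this is already a consistent polycyclic presentation. It then remains only to identify $\tau^q(G)$ with $\nu^q(G)$ under the $q$-perfect hypothesis.

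By Theorem~\ref{teo14}(iv) there is an epimorphism $\psi:\nu^q(G)\to\tau^q(G)$ with $\Ker\psi=\Delta^q(G)$, so that $\tau^q(G)\cong\nu^q(G)/\Delta^q(G)$. Hence the whole argument reduces to showing $\Delta^q(G)=1$ whenever $G$ is $q$-perfect. Recall that $\Delta^q(G)=\langle [g,g^\varphi]\mid g\in G\rangle\leq\Upsilon^q(G)$ and that the isomorphism $\mu:\Upsilon^q(G)\to G\otimes^q G$ restricts to $\Delta^q(G)\stackrel{\mu}{\cong}\nabla^q(G)$; thus $\Delta^q(G)=1$ if and only if $\nabla^q(G)=1$.

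To establish $\nabla^q(G)=1$ I would invoke the observation recorded in the Introduction that for a $q$-perfect group the canonical surjection $G\otimes^q G\twoheadrightarrow G\wedge^q G=(G\otimes^q G)/\nabla^q(G)$ is an isomorphism; as its kernel is exactly $\nabla^q(G)$, this forces $\nabla^q(G)=1$. If one prefers to avoid relying on that map being the canonical one, there is a self-contained replacement: $G$ polycyclic makes $G\otimes^q G$ polycyclic, hence Hopfian, so composing the surjection $G\otimes^q G\twoheadrightarrow G\wedge^q G$ with an abstract isomorphism $G\wedge^q G\cong G\otimes^q G$ yields a surjective endomorphism of a Hopfian group, which must be injective, again giving $\nabla^q(G)=1$. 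Transporting through $\mu$ then yields $\Delta^q(G)=1$.

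Combining these, $\psi$ is an epimorphism with trivial kernel, hence an isomorphism $\nu^q(G)\cong\tau^q(G)=\Gamma$, so the displayed relations present $\nu^q(G)$, as claimed. The only genuinely substantive step is the vanishing of $\Delta^q(G)$; everything else is bookkeeping against Definition~\ref{deftauq} and Theorem~\ref{teo14}. The main obstacle is therefore to pin down $\nabla^q(G)=1$ cleanly, that is, to be sure the Introduction's isomorphism $G\otimes^q G\cong G\wedge^q G$ is realized by the canonical quotient map (or else to supply the Hopfian argument above) rather than merely an abstract isomorphism of groups.
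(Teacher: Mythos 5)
Your proposal is correct and follows essentially the same route as the paper: identify the displayed presentation with that of $\tau^q(G)$ from Definition~\ref{deftauq}, invoke Theorem~\ref{teo14} to get $\tau^q(G)\cong\nu^q(G)/\Delta^q(G)$, and conclude from $\Delta^q(G)=1$ when $G$ is $q$-perfect. The paper simply asserts this last vanishing, whereas you justify it via $\Delta^q(G)\stackrel{\mu}{\cong}\nabla^q(G)$ together with the Hopfian property of the polycyclic group $G\otimes^q G$; that added care is the only difference.
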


	\subsection{A polycyclic presentation for the $q$-tensor square of a polycyclic group}

By all we have seen, a method for determining a consistent polycyclic presentation for the $q$-tensor square
$G \otimes^q G$ from a given consistent polycyclic presentation of $G$ consists of:
\begin{alg}
\begin{itemize}
\item[$a)$] Determine a consistent polycyclic presentation for $G \wedge^q G$.
\item[$b)$] Determine a consistent polycyclic presentation for $\tau^q(G)$.
\item[$c)$] Determine a consistent polycyclic presentation for $\nu^q(G)$.
\item[$d)$] Determine a consistent polycyclic presentation for the subgroup $\Upsilon^q(G)$ of $\nu^q(G)$.
\end{itemize}
\end{alg}

Step $(a)$ is a direct application of the method for computing a central extension in Sec.~\ref{gast}.
If $G=F_n/R$ a consistent polycyclic presentation of $G$, then we can determine a consistent polycyclic
presentation for $E_q(G)=F_n/[F_n, R^q] R^q$ and we get
$G \wedge^q G$ as the subgroup $(E_q(G))^\prime (E_q(G))^q$.
Step  $(b)$ is thus a direct application of the method developed in Sec.~\ref{sec:4}.
Step $(c)$ is obtained by another application of the method for computing a central extension in Sec.~\ref{gast}, in
order
to compute $\mathfrak{E}^q(\tau^q(G))$ which, by Theorem~\ref{teo16}, is isomorphic to $\nu^q(G)$.
Finally, step $(d)$ is an application of standad methods for compting presentations of subgroups of polycycliclly
presented groups.


\renewcommand{\refname}{REFERENCES}

\end{document}